\documentclass[11pt,reqno,a4paper]{amsart}

\usepackage{xcolor}

\usepackage{amsmath,amsfonts,amsthm,amssymb}
\usepackage{enumitem}
\usepackage[
colorlinks,
pdfpagelabels,
pdfstartview = FitH,
bookmarksopen = true,
bookmarksnumbered = true,
linkcolor = blue,
plainpages = false,
hypertexnames = false,
citecolor = red] {hyperref}

\usepackage[capitalize]{cleveref}
\usepackage[T1]{fontenc}

\usepackage{graphicx}
\usepackage{cite}

\usepackage{algorithm}

\usepackage{tabularx}  

\usepackage{mathabx}
\usepackage{esint}

\usepackage{subfigure}

\makeatletter
\def\section{\@startsection{section}{1}%
\z@{.7\linespacing\@plus\linespacing}{.5\linespacing}%
{\bfseries
\centering
}}
\def\@secnumfont{\bfseries}
\makeatother

\usepackage{graphicx}

\usepackage[capitalize]{cleveref}
\crefname{equation}{}{}

\newcounter{hipotcounter}
\renewcommand{\thehipotcounter}{(H\arabic{hipotcounter})}
\newenvironment{assump}[1]{%
  \refstepcounter{hipotcounter}
  \label{#1}
  \textbf{\thehipotcounter}\ 
  }{}

\usepackage{comment}

\includecomment{comment}
\usepackage{algorithm}
\usepackage{algorithmic}

\specialcomment{supplementary}
{\colorlet{savedcolor}{.} \color{blue} \begingroup \ttfamily      \noindent \underline{Supplementary details:} \newline \newline \footnotesize }{\endgroup   \color{savedcolor}}

\newcommand{\N}{\mathbb{N}}

\newcommand{\E}{\mathbb{E}}

\newcommand{\HeM}{\hat{H}_{M}}

\newtheorem{theorem}{Theorem}[section]
\newtheorem{lemma}[theorem]{Lemma}
\newtheorem{proposition}[theorem]{Proposition}
\newtheorem{corollary}[theorem]{Corollary}

\theoremstyle{definition}

\theoremstyle{remark}
\newtheorem{remark}[theorem]{Remark}

\numberwithin{equation}{section}
\usepackage{enumerate}

\usepackage{bbm}

\newcommand{\R}{\mathbb{R}}

\renewcommand{\P}{\mathbb{P}}

\newcommand{\norm}[1]{\ensuremath{\Vert #1 \Vert}} 

\begin{document}

\title[]{Error analysis for learning fractional stochastic differential equations with applications in neural approximations}

\author[Dehshiri]{Mahdi Dehshiri}
\address{Aalto University School of Business, Department of Information and Service Management, PO Box 11110, 00076 Aalto, Finland}

\email{mahdi.dehshiri@aalto.fi}

\author[Martinez]{Kerlyns Martinez}
\address{University of Concepci\'on, Department of Mathematical Engineering, Concepci\'on, Chile}
\email{kermartinez@udec.cl}

\author[Viitasaari]{Lauri Viitasaari}
\address{Aalto University School of Business, Department of Information and Service Management, PO Box 11110, 00076 Aalto, Finland}
\email{lauri.viitasaari@aalto.fi}

\thanks{}
\date{\today}
\begin{abstract}
This paper develops a framework for the error analysis in nonparametric model fitting of fractional stochastic differential equations based on discrete observations. We identify and quantify the main error sources —time discretization, coefficient approximation, and model fitting error —within a unified framework. Through Sobolev-type norms, we derive convergence rates that incorporate the regularity of trajectories, thereby capturing the interaction of these error components. To demonstrate the applicability of the theory, we introduce a training scheme for coefficient function estimation based on shallow neural networks and a recurrent architecture. Numerical experiments validate the theoretical findings and illustrate the effectiveness of the approach.
\end{abstract}
\keywords{stochastic differential equations, learning coefficients, error analysis, neural networks}
\subjclass[2020]{60H10, 65C30 (primary); 68T07, 68T05, 65L20, 62G05 (secondary)}

\maketitle

\section{Introduction}

In many fields such as finance, biology, and physics, identifying the inherent stochasticity within systems is crucial, in particular when these systems are analyzed from an open system perspective, which acknowledges the influence of external factors that can introduce randomness. Stochasticity may arise from elements extrinsic to the system or arise from uncertainties related to specific model parameters. For instance, in financial markets, the asset prices fluctuate unpredictably due to numerous variables~\cite{black1973pricing}. Similarly, in epidemiological dynamics, the spread of infections might be influenced by social behavior and random environmental factors~\cite{vstvepan2007kermack}. In fluid dynamics, turbulence represents another classic example, where chaotic flow patterns are observed~\cite{pope1994relationship}. In order to maintain the replicability of these phenomena, at least at certain scales, it is vital to understand and justify the sources of stochasticity involved. This implies a careful selection of the type of noise incorporated into the models. The typical choices range from Gaussian noise characterized by independent increments to autocorrelated Gaussian noise, jump processes, or heavy-tailed distributions that can capture extreme events and anomalies. Each choice has significant implications for the behavior of the modeled system and the accuracy of its predictions.

To capture memory and long-range dependence in complex systems across diverse fields such as finance, physics, and biology, it is nowadays common to consider fractional Brownian motion (fBm), introduced by Mandelbrot and Van Ness~\cite{mandelbrot1968fractional}. The fBm is a centered Gaussian process with stationary increments and is characterized by the Hurst parameter $H\in(0,1)$. Its covariance function is given by
\[
\mathbb{E}[B^H_t B^H_s] = \frac{1}{2}\left(t^{2H} + s^{2H} - |t-s|^{2H}\right), \quad\mbox{ for any } s,t \ge 0,
\]
which explicitly encodes long-range dependence when $H>1/2$. Applications include modeling rough volatility in finance~\cite{bayer2016pricing,bondi2024rough}, intermittency in turbulent flows~\cite{bossy2025weak}, and phenomena in materials science~\cite{avelin2025renormalized}.

In this article, we are interested in phenomena that can be represented as the solution of a stochastic differential equation (SDE) of the type
\begin{equation}\label{eq:true_model-general}
X_t^H = x_0 + \int_0^t b(s,X^H_s)ds +\int_0^t \sigma(s,X^H_s)dG_s, \quad t\in[0,T],
\end{equation}
where $b$ and $\sigma$ are suitable but unknown coefficient functions and $G$ is a random noise that is H\"older continuous of order larger than $\frac12$. In particular, we consider the noise $G$ to be the fractional Brownian motion $B^H$ with Hurst index $H>\frac12$, although our main results remain valid for more general H\"older continuous drivers $G$, see discussion after Theorem \ref{thm:main}. We stress that such models can capture rather arbitrary distributional and statistical properties, cf. \cite{shevchenko}, and hence forms an interesting class for modeling purposes.

Under mild assumptions on the coefficients, the model is well-posed and the solution is H\"older continuous~\cite{Nualart-Rascanu2002}. At this point, the exact parametrization of the model is not always known; often, the coefficients are only partially known, and the problem becomes one of estimating coefficients~\cite{cont2004nonparametric,hu2010parameter}. In many other cases, however, the coefficient parameterization is totally unknown, although heteroscedasticity, trading effects, and other structural properties may be presumed. In such scenarios, data-driven modeling approaches provide a natural framework to approximate the unknown coefficients directly from observations of the system, leveraging statistical or machine learning techniques to construct models that are consistent with observed behavior~\cite{kutz2017deep}. Recent advancements include Neural ODEs~\cite{chen2018neural} and Neural SDEs~\cite{kidger2021neural}, which complement traditional frameworks by using neural networks to parameterize the drift and diffusion coefficients. These models are trained on observed data, allowing to represent potentially nonlinear dynamics that are challenging to determine analytically. For related literature, we also mention \cite{hayashi2022fractional}, introducing ``fractional SDE-nets'' as generative models for time series with long-term memory. While their focus is on data generation rather than error analysis, it illustrates a growing interest in combining fractional dynamics with machine learning techniques.  

\noindent {\bf Contributions and related works.}  In this paper, we present a detailed error analysis related to model fitting of \eqref{eq:true_model-general} based on observations $(\hat{X}_{t_m}; 1\le m\le M)$, with $\Pi=\{t_m\}$ a partition of the time observed interval $[0,T]$ with frequency $\Delta t$. More precisely, we provide upper bounds related to three different sources of errors, see Theorem \ref{thm:main} below. 
Our main contribution is bounding the error arising from estimation of the coefficient functions $b$ and $\sigma$. Our upper bound simultaneously highlights the correct norms in which $b$ and $\sigma$ should be estimated. In particular, we prove that it suffices to consider certain weak $L^p$-type norms instead of pointwise convergence, allowing direct usage of certain neural network architectures for the estimation. On a related data-driven techniques based on machine learning, we refer to~\cite{yang2023neural,hayashi2022fractional}. Another source of error is the noise fitting error, as the noise process is unobservable. In the case of $G=B^H$ which is our main focus, this means that we need to include in the analysis the estimation of the Hurst parameter~\cite{coeurjolly2001estimating,Kubilius-Mishura-Ralchenko2017}. Finally, the third source of error is related to the time discretisation schemes, a topic that is already relatively well understood in the literature, see e.g.~\cite{Mishura-Shevchenko2008,Deya-Neuenkirch-Tindel-2012,Hu-Liu-Nualart} and references therein. 

Analysis of these three sources of error requires a careful treatment of the regularity of trajectories and the choice of the appropriate functional framework. We employ Sobolev-type norms to capture this regularity, and, along with some harmonic analysis, potential theory, and probabilistic arguments, we derive explicit convergence rates for each type of error. The results hold uniformly over a broad class of approximation procedures. That is, for any sufficiently reasonable method that approximates $(b,\sigma)$ from direct observations, we construct an approximate model $X^n$ and quantify the error in Sobolev norms. As a concrete illustration, we apply our general theory to shallow neural networks with uniformly bounded activations. 

Furthermore, we propose a training algorithm inspired by Yang et al.~\cite{yang2023neural}, which in our case explicitly incorporates the regularity of the solution and the geometry of the underlying functional space. Unlike previous works, which focus mainly on processes driven by standard Brownian motion or stable L\'evy noise with entropy-based loss functions, we consider processes with autocorrelated Gaussian noise and an adequate loss function. To formalize this, suppose we observe a data set  $\{\hat{X}_{0}, \hat{X}_{t_1}, \ldots, \hat{X}_{t_M}\},$ where the sampling error (or fitting error) is associated with the frequency of observations, and assume for simplicity that the observations are uniformly spaced, i.e. $t_m - t_{m-1} = \hat{\Delta}$ for some $\hat\Delta>0$. On each subinterval $[t_{m-1},t_m]$, we introduce a finer partition $\pi_m^N = \{t_0^{N,m} = t_{m-1}, t_1^{N,m},\ldots, t_N^{N,m} = t_m\}$ with mesh size $\|\pi_m^N\|=\Delta t$, and approximate the estimated model by its discretisation over $\pi_m^N$. The coefficients $(b_n,\sigma_n)$ are then trained by minimizing a loss function defined on the coarse grid of step size $\hat\Delta$, while the discretized dynamics converge to the continuous model as $\Delta t \to 0$. Within this framework, our analysis quantifies in a unified way the errors arising from approximation, discretisation, and noise parameter estimation.

The rest of this paper is organized as follows. Section \ref{sec:main_results} presents our main results and underlying assumptions, accompanied with discussions. We illustrate the applicability of our approach in Section \ref{sec:ML_application} where we discuss quantitative approximation using a recurrent neural network approach. Section \ref{sec:numerics} includes numerical experiments based on neural networks, showing good performance also in practice. All proofs are postponed to Appendix \ref{sec:apprendix}, which covers the time error~\ref{subsubsec:time}, the fitting error~\ref{subsubsec:fitting}, and the coefficient approximation error~\ref{subsubsec:approximation}.

\section{Main results}\label{sec:main_results}
We assume that the true observations are solution trajectories $X$ for the stochastic differential equation
\begin{equation}\label{eq:true_model}
X_t^H = x_0 + \int_0^t b(s,X^H_s)ds +\int_0^t \sigma(s,X^H_s)dB^H_s, \quad t\in[0,T],
\end{equation}
where $B^H$ is a fractional Brownian motion with Hurst index $H\in\left(\frac12,1\right)$, and $b$ and $\sigma$ are suitable but unknown coefficient functions. 

In practice, data arrive at discrete frequencies rather than as a continuous stream of information. Hence, one only observes $(\hat{X}_{t_m}; 1\le m\le M)$, with $\Pi=\{t_m\}$ as a partition of the time window $[0,T]$. Even though typically one might also have observational error, throughout we ignore it and assume we observe the true solution. That is, we have $\hat{X}_{t_m} = X_{t_m}^H$. 

If one tries to calibrate the model by using observations, on top of estimating unknown coefficient functions $b$ and $\sigma$ one also has to fit a suitable noise structure by estimating the Hurst parameter $H$. This leads to the fitting error arising from (formally) considering a process
$(X_t^{\hat{H}_M};t\in[0,T])$ as a solution to 
\begin{equation}\label{eq:statistical_mode}
X_t^{\hat{H}_M} = x_0 + \int_0^t b(s,X^{\hat{H}_M}_s)ds +\int_0^t \sigma(s,X^{\hat{H}_M}_s)dB^{\hat{H}_M}_s.
\end{equation}
Note that here we assume that the underlying source of randomness is the same for both $B^H$ and $B^{\hat{H}_M}$, simply the parameter $H$ is estimated. In the fractional Brownian motion case, this means that if one uses a kernel representation $B^H = \int K_H(t,s)dW_s$ with $W$ a standard Brownian motion, the estimated path is given by $B^{\hat{H}_M} = \int K_{\hat{H}_M}(t,s)dW_s$. In our proofs, we consider the Mandelbrot-van-Ness representation of the fBm, see \cite{mandelbrot1968fractional}.

The second error, called the approximation error in the sequel, arises from the estimation of  the coefficient functions $b$ and $\sigma$. That is, we approximate the solution $X^{\hat{H}_m}$ by a process $(X^n_t; t\in[0,T])$ given as the solution to
\begin{equation}
\label{eq:coefficient-approx}
    X_t^{n,\hat{H}_M} = x_0 + \int_0^t b_n(s,X^{n,\hat{H}_M}_s)ds +\int_0^t \sigma_n(s,X^{n,\hat{H}_M}_s)dB^{\hat{H}_M}_s.
\end{equation}
Here $n$ denotes the ''approximation-level'' on the coefficient functions, and in general does not necessarily depend on $M$ that corresponds to the estimation of the Hurst index $H$.

Finally, one needs to approximate the continuous trajectories via discretisation. For this, a classical Euler-Maryama scheme for solving \eqref{eq:true_model} is
$$
X_{k+1}= X_k + b(t_k,X_k)\Delta t_k + \sigma(t_k,X_k)\Delta B^H_k.
$$
Note that taking $\sigma(t_k,X_k) = \sigma_{X_0}$ constant given initial data $X_0$ and formally plugging in $H=\frac12$ would give 
$$
X_{k+1}= X_k + b(t_k,X_k)\Delta t_k + \sigma_{X_0}\Delta W_k,
$$
where $W$ is a standard Brownian motion. Coefficient approximation errors in such model are studied, e.g., in \cite{yang2023neural}. In our case, we use continuous interpolation of the Euler-Maruyama scheme and approximate \eqref{eq:coefficient-approx} with
\[X_t^{n,\hat{H}_M,\Delta t} = x_0 + \int_0^t b_n(\eta(s),X^{n,\hat{H}_M,\Delta t}_{\eta(s)})ds +\int_0^t \sigma_n(\eta(s),X^{n,\Delta t,\hat{H}_M}_{\eta(s)})dB^{\hat{H}_M}_s,\]
where $\eta(s) = \Delta t\lfloor\tfrac{s}{\Delta t}\rfloor$.

Our aim is to quantify the triple-error arising from the approximation of $X^H$ with $X^{n,\hat{H}_M,\Delta t}$. That is, we quantify
\begin{align*} &\|X^{n,\hat{H}_M,\Delta t}-{X}^H\|_{\alpha,\infty} \\
&\qquad \leq \| X^{\hat{H}_M} - {X}^H\Vert_{\alpha,\infty} + 
\|{X}_{t}^{\hat{H}_M} - X_{t}^{n,\hat{H}_M}\|_{\alpha,\infty} + \|{X}^{n,\hat{H}_M} - X^{n,\hat{H}_M,\Delta t}\|_{\alpha,\infty} \\
    &\qquad =:\mathcal{E}_{\texttt{fit},\alpha}(M)+\mathcal{E}_{\texttt{appr},\alpha}(n) + \mathcal{E}_{\texttt{time},\alpha}(\Delta t)  ,
\end{align*}
where $
\Vert f\Vert_{\alpha,\infty} = \sup_{0\leq t\leq T}\left\{ |f(t)| + \int_0^t \frac{|f(t)-f(s)|}{|t-s|^{\alpha+1}}ds\right\}
$. The first error represents the fitting error, the second is the coefficient approximation error, and the last is the time-discretisation error. The norm $\Vert \cdot\Vert_{\alpha,\infty}$ in which the error is measured is the norm of the Banach space of functions in which the solutions $X$ belong, hence providing a natural candidate measure for the error. Note that the norm is a fractional Sobolev-type norm involving $|f(t)|$ measuring the size of the function and $\int_0^t \frac{|f(t)-f(s)|}{|t-s|^{\alpha+1}}ds$ measuring the fluctuations of the function.

\subsection{Notation and assumptions}
\label{sec:assumptions}
For a compact $\mathcal{K}\subset \mathbb{R}^d$, let $f:[0,T] \times \mathcal{K} \mapsto \mathbb{R}$. For the norm of $f\in L^\rho(t;L^q(\mathcal{K}))$ we use short notation $\Vert \cdot\Vert_{(\rho,q)}$, i.e. 
$$
\Vert f\Vert_{(\rho,q)} = \left(\int_0^T \left(\int_{\mathcal{K}} |f(t,x)|^qdx\right)^{\frac{\rho}{q}}dt\right)^{1/\rho}.
$$
In particular, throughout the article the norms are considered on a large compact set $\mathcal{K}$ containing all the paths, while this choice of $\mathcal{K}$ will be omitted on the notation.
For $f:[0,T]\mapsto \mathbb{R}$ set
$$
\Vert f \Vert_{\alpha,\lambda} = \sup_{0\leq t\leq T}e^{-\lambda t}\left(|f(t)|+\int_0^t \frac{|f(t)-f(s)|}{(t-s)^{\alpha+1}}ds\right),
$$
$$
\Vert f \Vert_{\alpha,\infty} = \sup_{0\leq t\leq T}\left(|f(t)|+\int_0^t \frac{|f(t)-f(s)|}{(t-s)^{\alpha+1}}ds\right),
$$
and 
$$
\Vert f \Vert_{1,1-\alpha} = \sup_{0\leq s\leq t\leq T}\left(\frac{|f(t)-f(s)|}{|t-s|^{1-\alpha}}+\int_s^t \frac{|f(t)-f(r)|}{(t-r)^{2-\alpha}}ds\right).
$$
We pose the following assumptions that ensure \eqref{eq:true_model} to have a unique solution that is $(1-\alpha)$-H\"older continuous, see \cite[Th. 2.1]{Nualart-Rascanu2002}. On top of that, they allow us to derive bounds for the approximation error, see Theorem \ref{thm:main} below.

\begin{assump}{H0}
The true Hurst parameter $H$ lies in the interval $(\tfrac12, 1)$.
\end{assump}

\begin{assump}{H1} The map ${\bf x}\mapsto \sigma(t,{\bf x})$ is differentiable in $x$ and:
\begin{itemize}
    \item[(H2.a)]\label{H1a} For all $t\in[0,T]$,  $x\mapsto \sigma(t,x)$ is globally Lipschitz continuous. 
    \item[(H2.b)]\label{H1b} For all $t\in[0,T]$, $x\mapsto \nabla\sigma(t,x)$ is locally Lipschitz continuous. 
    \item[(H2.c)]\label{H1c} For all $x\in\mathbb{R}^d$, $t\mapsto \sigma(t,x),\nabla\sigma(t,x)$ are Lipschitz continuous. 
\end{itemize}
\end{assump}

\begin{assump}{H2} The map ${\bf x}\mapsto b(t,{\bf x})$ satisfies the following: 
\begin{itemize}
    \item[(H3.a)]\label{H2a} For all $t\in[0,T]$, $x\mapsto b(t,x)$ is locally Lipschitz continuous.
    \item[(H3.b)]\label{H2b}  There exists a function $b_0\in L^{\infty}(0,T;\mathbb{R}_+)$ and a non-negative constant $L_0$ such that 
    \[|b(t,x)|\le L_0|x|+b_0(t),\forall (t,x)\in\mathbb{R}_+\times\mathbb{R}^d.\]
\end{itemize}
\end{assump}

\begin{assump}{H3} There exists $\gamma\in[0,1]$ and a non-negative constant $K_0$ such that, for all $(t,x)\in\mathbb{R}_+\times\mathbb{R}^d,$
\[|\sigma(t,x)|\le K_0 (1+|x|^\gamma).\]
\end{assump}

\begin{assump}{H4}
The map $t\mapsto b(t,x)$ is $\theta$-H\"older continuous with $\theta\in(2H-1,1]$.
\end{assump}

Since we approximate unknown coefficient functions $b$ and $\sigma$ by $b_n$ and $\sigma_n$, the above requirements are assumed to remain valid for the approximations $b_n$ and $\sigma_n$ as well. Thus we pose the following additional assumption.

\begin{assump}{H5}
Hypotheses \ref{H1}-\ref{H4} are valid for sequences $b_n$ and $\sigma_n$ with Lipschitz and H\"older constants uniformly bounded in $n\in \mathbb{N}$.
\end{assump}

Assumption \ref{H5} is reasonable, as $b_n$ and $\sigma_n$ are supposed to approximate true functions $b$ and $\sigma$. As such, it is natural that the associated constants remain bounded. We stress however, that we do not assume pointwise convergence of $b_n$ and $\sigma_n$ (or their partials): we merely assume the approximations to be reasonable in the sense that the constants remain bounded. Once this is established, we show that weaker $L^p$-type convergence is sufficient for the approximation to converge to the true solution, see Theorem \ref{thm:main} below. As a consequence, one can even consider approximations for which Lipschitz and H\"older constants are not uniformly bounded in $n\in \mathbb{N}$. In this case, one can introduce a cutting that forces the constants to remain bounded, and still obtain small error in the $L^p$-norms.

\begin{remark}\label{rem:H_bounds}
From Assumption  \ref{H0}, we can assume there exists an interval $(\overline{H},\underline{H})\subset (\frac12,1)$ such that $H\in (\overline{H},\underline{H})$. When necessary, we use these bounds without redefining the parameters $\underline{H},\overline{H}$. 
\end{remark}

\subsection{Formulation of the main results}
Our main result is the following. It follows directly from Propositions \ref{prop:time_error}, \ref{prop:fitting_error},  and \ref{prop:coefficient-approximation} studying three different errors. These propositions and their proofs are postponed to the Appendix \ref{sec:proofs}. 
\begin{theorem}
\label{thm:main}
    Suppose that the assumptions of Section \ref{sec:assumptions} hold, in particular, \ref{H0} holds for both $H$ and $\HeM$, with $\hat{H}_M$ an estimator of the Hurst parameter. Let $\epsilon,\epsilon_0>0$, $s,s_2,\tilde{\delta}\in(0,1)$ with $\tilde{\delta}<\HeM$, and denote $\tilde\sigma = \sigma - \sigma_n$ and $\tilde b = b - b_n$. Then, for any 
$q>\frac{d}{1-s}$, $q_2>\frac{1}{1-s_2}, \rho > \frac{1}{1-\alpha}$ such that $q\ge\rho$, and any $\alpha \in (1-H\wedge\HeM, \, \min(1/2,s\tilde{\delta},s_2))$, we have for almost all realisations:
    \begin{align*}
        \mathcal{E}_{\texttt{appr},\alpha}(n) &\leq  C_\omega\left[ \Vert\partial_t\nabla_z\tilde\sigma\Vert_{(q_2,q)} +\Vert\nabla_z\tilde\sigma\Vert_{(q_2,q)} + \Vert\partial_t\tilde\sigma\Vert_{(q_2,q)} + \Vert\tilde\sigma\Vert_{(q_2,q)}\right.  \\
    &+\left.\Vert \nabla_z \tilde b\Vert_{(\rho,q)}  + \Vert  \tilde b\Vert_{(\rho,q)}\right]\\
        \mathcal{E}_{\texttt{fit},\alpha}(M) &\leq C_\omega|\HeM-H|^{\frac12}.
    \end{align*}

    Furthermore, for any $\eta>0$ there exists $\Delta_0 >0$ and $\Omega_0\subseteq \Omega$ such that $\P(\Omega_0)>1-\eta$ and, for any $\Delta t\leq \Delta_0$:
    \begin{align*}
        \mathcal{E}_{\texttt{time},\alpha}(\Delta t) & \leq C_{\epsilon_0}\,  \Delta t^{2\hat{H}_M-1-\epsilon_0}.
    \end{align*}
\end{theorem}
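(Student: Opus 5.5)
The theorem splits into three independent estimates, one for each term in the triangle inequality. I will prove each in the Nualart--R\u{a}\c{s}canu pathwise framework, in which $\int_0^t f\,dB^H$ is a generalized Lebesgue--Stieltjes integral. The basic tool is the bound
\[
\Big|\int_0^t f\,dg\Big| + \int_0^t \frac{|\int_s^t f\,dg|}{(t-s)^{\alpha+1}}ds \le C\,\Lambda_\alpha(g)\int_0^t \big((t-r)^{-2\alpha}+r^{-\alpha}\big)\Big(|f(r)|+\int_0^r\frac{|f(r)-f(u)|}{(r-u)^{\alpha+1}}du\Big)dr ,
\]
where $\Lambda_\alpha(g)\lesssim \|g\|_{1,1-\alpha}$. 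This quantity is finite almost surely as soon as $1-\alpha<H\wedge\HeM$, which is exactly why we impose $\alpha>1-H\wedge\HeM$. In each case I write the equation for the difference of two solutions and linearize using (H1)--(H5). I then work in the weighted norm $\|\cdot\|_{\alpha,\lambda}$ and choose $\lambda=\lambda(\omega)$ large enough that the linear term is absorbed, which is a pathwise Gronwall argument. Finally I pass back to $\|\cdot\|_{\alpha,\infty}$ at the cost of the factor $e^{\lambda T}$, which goes into $C_\omega$.

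\textbf{Fitting error.} Here both processes have the same coefficients but are driven by $B^H$ and $B^{\HeM}$, defined through the Mandelbrot--van Ness representation from the same $W$. The difference $X^{\HeM}-X^H$ satisfies a linear inequality with forcing term $\int\sigma(s,X^H_s)\,d(B^{\HeM}-B^H)_s$. This forcing is bounded by $\|\sigma(\cdot,X^H)\|_{\alpha,\infty}\,\Lambda_\alpha(B^{\HeM}-B^H)$. The Gronwall absorption above then gives $\mathcal{E}_{\texttt{fit},\alpha}\le C_\omega \|B^{\HeM}-B^H\|_{1,1-\alpha}$. To control this seminorm, note that $\E|B^{H'}_t-B^{H'}_s-B^H_t+B^H_s|^2 \le C|H'-H|\,|t-s|^{2(H\wedge H')-\epsilon}$ uniformly for $H,H'\in(\overline H,\underline H)$; this follows by differentiating the Mandelbrot--van Ness kernel in $H$. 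Garsia--Rodemich--Rumsey, applied jointly in $(t,H')$ (or via a Kolmogorov criterion on the two-parameter field), then yields $\|B^{H'}-B^H\|_{1,1-\alpha}\le C_\omega|H'-H|^{1/2}$ almost surely. This also handles the fact that $\HeM$ is random.

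\textbf{Approximation error.} Write $b(s,X)-b_n(s,X^n) = [b_n(s,X)-b_n(s,X^n)]+\tilde b(s,X)$, and decompose the $\sigma$ terms the same way. The first bracket is Lipschitz-linear, with constants uniform in $n$ by (H5), and is absorbed as before. What remains are $\int\tilde b(s,X_s)ds$ and $\int\tilde\sigma(s,X_s)dB$. The key is to bound the $\|\cdot\|_{\alpha,\infty}$-type norms of $s\mapsto\tilde\sigma(s,X_s)$ by space-time $L^p$ norms of $\tilde\sigma$ and its derivatives. I expect this to be the main obstacle. For the pointwise values I use a Sobolev embedding $W^{1,q}\hookrightarrow C^{1-d/q}$, valid since $q>d/(1-s)$, in $z$ on $\mathcal K$, and $W^{1,q_2}$ in $t$, valid since $q_2>1/(1-s_2)$. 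These explain the norms $\|\partial_t\nabla_z\tilde\sigma\|,\|\nabla_z\tilde\sigma\|,\|\partial_t\tilde\sigma\|,\|\tilde\sigma\|$ in $L^{q_2}(L^q)$. For the increment part I estimate
\[
|\tilde\sigma(r,X_r)-\tilde\sigma(u,X_u)|\le [\tilde\sigma]_{C^{s_2}_t}|r-u|^{s_2}+[\tilde\sigma]_{C^{s}_z}|X_r-X_u|^{s},
\]
and use the $\tilde\delta$-Hölder continuity of $X$, valid for $\tilde\delta<\HeM$. This gives integrability of $(r-u)^{-\alpha-1}$ times these increments precisely when $\alpha<\min(s\tilde\delta,s_2)$. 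The drift term only needs $L^\rho$ in time, with $\rho>1/(1-\alpha)$ by Hölder against $(t-r)^{-\alpha}$, which explains $\|\tilde b\|_{(\rho,q)},\|\nabla_z\tilde b\|_{(\rho,q)}$.

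\textbf{Time error.} I compare $X^n$ with its continuous Euler interpolation. The defect consists of $b_n(s,X_s)-b_n(\eta(s),X_{\eta(s)})$, controlled by (H4) and the Hölder continuity of paths, and the analogous $\sigma_n$ term. The $\sigma_n$ term is harder: a naive bound gives only $\Delta t^{H-\alpha}$-type rates. To reach $\Delta t^{2\HeM-1-\epsilon_0}$ I will expand $\sigma_n(X_s)-\sigma_n(X_{\eta(s)})\approx\nabla\sigma_n\,\sigma_n\,(B_s-B_{\eta(s)})$, using (H1) for $\nabla\sigma$. The leading contribution is then $\sum_k\int_{t_k}^{t_{k+1}}(B_s-B_{t_k})dB_s$-type terms. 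Their pathwise size is $\Delta t^{2\HeM-1-\epsilon_0}$ by the Hölder regularity $\HeM-\epsilon$, but this holds only with constants controlled off a small-probability set. This is why the result holds on $\Omega_0$ with $\P(\Omega_0)>1-\eta$ and for $\Delta t\le\Delta_0$, following Mishura--Shevchenko / Hu--Liu--Nualart. Combining the three estimates gives the theorem.
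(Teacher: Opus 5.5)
Your proposal is correct and follows essentially the paper's route: the same three-way split; Nualart--R\u{a}\c{s}canu estimates with absorption in $\|\cdot\|_{\alpha,\lambda}$ for large $\lambda$ once the forcing terms $F^{\tilde b}(X)$ and $G^{\tilde\sigma}(X)$ are isolated (your mixed Sobolev--Morrey embedding does the job of the paper's Riesz-potential and pinning Lemmas~\ref{lemma:difference}--\ref{lemma:difference-sigma}); a variance bound for $B^{\HeM}-B^H$ plus a Kolmogorov-type argument and noise-stability of the solution map for the fitting error (you derive the stability by linearization, the paper cites \cite{Viitasaari-2022}); and Mishura--Shevchenko for the time error. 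Your joint-in-$(t,H')$ Kolmogorov argument improves on Lemma~\ref{lem:brownian_convergence}, which is proved for fixed deterministic $H_1,H_2$ and then applied to the random $\HeM$, but to get the exponent $\frac12$ uniformly in $H'$ you need the variance bound of order $|H'-H|^2$ (which differentiating the Mandelbrot--van Ness kernel does give), since the order $|H'-H|$ you wrote only yields $|H'-H|^{1/2-\epsilon}$ uniformly.
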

To the best of our knowledge, similar approximation and fitting errors are only presented in \cite{huNu2010} in which simpler time-independent SDEs without drift term were considered, see \cite[Theorem 4]{huNu2010}. The bound in \cite{huNu2010} provides upper bound in this special case in terms of $L^\infty$ norms. In comparison, by using clever potential-theoretic arguments we are able to provide weaker $L^p$-estimates from which one can easily deduce \cite[Theorem 4]{huNu2010} as a special case. Indeed, if one has pointwise convergence for $b_n$ and $\sigma_n$ and their partials, then the approximation error converge to zero automatically. 
The main benefit of our approach is that estimates that convergence in $L^\infty$-norms are much harder to obtain compared to weaker $L^p$-type norms where pointwise convergence does not necessarily hold. This provides an advantage as, for example, typical kernel based methods usually provide convergence in $L^p$-spaces but not in the pointwise sense. As a concrete application, see discussion on approximating with shallow neural networks in Section \ref{sec:ML_application}. We also stress that our upper bound does not depend on the solution paths, as long as they can be contained inside a large compact set $\mathcal{K}$ which is possible whenever hypothesis \ref{H5} holds. Finally, we note that the approximation error presented in Theorem \ref{thm:main} remains valid regardless of the noise $G$ as long as the noise is H\"older continuous of sufficient order. For the fitting error in the more general case, one obtain a bound directly from \cite[Theorem 2]{Viitasaari-2022} in terms of the error made in the noise. The above formulation for the fitting error arises in particular from the estimation of the Hurst index $H$, see Lemma \ref{lem:brownian_convergence}.
\begin{remark}
\label{rem:sobolev}
By using H\"older inequality, note that we can bound the approximation error by 
\begin{align*}
        \mathcal{E}_{\texttt{appr},\alpha}(n) &\leq  C\left[ \Vert \tilde\sigma\Vert_{W^{2,q_2q}} + \Vert \tilde{b}\Vert_{W^{1,\rho q}}\right], 
    \end{align*}
    where $W^{k,p}$ denotes the Sobolev space on $[0,T]\times \mathcal{K}$. This provides a natural norm where errors $\tilde{\sigma}$ and $\tilde{b}$ should be measured.
    \end{remark}
\begin{remark}
\label{remark:fitting-error}
    The fitting error in Theorem \ref{thm:main} depends explicitly on the choice of estimator $\hat{H}_M$, and convergence rates for the approximation can be deduced accordingly. For instance, by considering the one-dimensional case we can use the consistent estimator proposed in \cite{Kubilius-Mishura-Ralchenko2017} given by
\begin{align}\label{eq:H_estimator}
\hat{H}_M &= \frac12 - \frac1{2\log2}\log\left(\frac{\sum_{m=1}^{2M-1}(\hat{X}_{t_{m+1}^{2M}} - 2\hat{X}_{t_{m}^{2M}}+\hat{X}_{t_{m-1}^{2M}})^2}{\sum_{m=1}^{M-1}(\hat{X}_{t_{m+1}^{M}} - 2\hat{X}_{t_{m}^{M}}+\hat{X}_{t_{m-1}^{M}})^2}\right),
\end{align}
where $\Pi^M=\{t_0^M,\ldots,t_M^M\}$ is a partition of size $M$ of the interval $[0,T]$. Then under the assumptions of \cite[Theorem 3.6]{Kubilius-Mishura-Ralchenko2017}, and the restriction of $\HeM$ to $(\frac12,1)$ if necessary, we get 
\[ \hat{H}_M = H+ O\left(\left(\frac{\log(M)}{M}\right)^{\gamma/2}\right),\]
for any $\gamma\in (1/2, H)$ leading to
\begin{equation*}
\mathcal{E}_{\texttt{fit},\alpha}(M) \leq C\left(\frac{\log(M)}{M}\right)^{\gamma/4}.
\end{equation*}
\end{remark}

 \begin{remark}
 From Lemma \ref{lemma:uniform-boundedness} and assumptions \ref{H1}-\ref{H2} we can apply Rademacher Theorem and guarantee existence of the weak derivatives for Theorem \ref{thm:main}. On the other hand, assumption $b_0\in L^\infty(0,T;\mathbb{R}_+)$, can be generalized. However, this paper concentrates on examining the errors arising from the approximation of the coefficients in Equation \eqref{eq:true_model}, while also taking the time-discretization and fitting errors into account. Therefore, we focus on applying existing results from the literature related to time approximation errors within our framework, without worrying about potential extensions towards this direction.
\end{remark}
\begin{remark}
Notice that different time-approximations can be considered, for instance, Milstein-type schemes with rate $\Delta t^{H-\alpha}\sqrt{-\log(\Delta t)}$ according to the results in \cite{Deya-Neuenkirch-Tindel-2012}, under the stronger condition $\sigma_n,b_n\in C^3_b(\mathbb{R}^d)$. Under similar assumptions, in \cite{Hu-Liu-Nualart} the authors propose some generalizations of Euler-scheme relying on the smoothness of the diffusion coefficient, with $L^p$ strong rate of convergence  $\Delta t^{2H - 1/2}$ if $H\in(1/2,3/4)$ and $\Delta t$ if $H\in(3/4,1)$. However, we aim to adhere to the regularity hypotheses proposed in~\cite{Nualart-Rascanu2002} to ensure well-posedness and solution regularity. 
\end{remark}

\section{Application to Quantitative Approximation by Neural Networks}\label{sec:ML_application}

In the context of neural networks, we can apply our results by incorporating universal approximation theorems. These theorems demonstrate that a feedforward neural network with at least one hidden layer can approximate any continuous function on a closed interval to any desired degree of accuracy, provided it has a sufficient number of neurons. 

For a given realization of the noise, let us consider $\mathcal{K}\subset\R^{d+1}$ be a compact large enough such that it supports the trajectory of the solution $X_t^{n,\hat{H}_M}$ to \eqref{eq:coefficient-approx}, with $b_n$ (respectively $\sigma_n$) as the approximation of $b$  (respectively $\sigma$) by means of shallow neural networks with activation function $\phi$. 

Following Siegel and Xu~\cite{siegel2024}, we approximate $b$ and $\sigma$ (living in some subspace of ${L}^{p}(K;\R^d)$ for some $p\geq1$) through linear combination of elements in some uniformly bounded dictionary $D_\phi = \{\phi(\langle\theta,\cdot\rangle):~ \theta\in \R^{d+1}\}\subset {L}^{p}(K;\R)$. More precisely, we approximate $b,\sigma$ with elements in 
\begin{align*}
\Sigma_{n,m}(\phi) := \left\{\sum_{j=1}^n \omega_j \phi_j:\, \phi_j\in D_\phi,\,  \omega_j\in \R,\, \sum_{j=1}^n|\omega_j|\leq m\right\},
\end{align*}
which corresponds to the set of single-hidden-layer neural networks with $n$ units, activation function $\phi$, and output weights bounded by $m$. 

In shallow neural networks, controlling activation growth is essential for stability. Bounded activations ensure Lipschitz continuity, prevent gradient explosion, and satisfy classical approximation theorems (see, e.g. \cite{hornik1994}). Unbounded activations (e.g., ReLU) are particularly useful to construct regressors for certain types of non-smooth targets \cite{yarotsky2017} and handle simple kinks and ridges effectively. Nevertheless, it cannot compactly represent arbitrary non-smooth features such as complex discontinuities or highly oscillatory behavior. In practice, unbounded activations can be suitable regressors but require explicit constraints to avoid unstable growth and ensure generalization. In terms of the dictionary $D_\phi$, $D_\phi$ is uniformly bounded when $\phi$ is bounded, whereas in the more general case of unbounded activation, we need to restrict the set of admissible weights. In particular, when $\phi(x) = \text{ReLU}(x)$, the dictionary $D_k := \left\{(\langle\theta,\cdot \rangle)^k\vee 0:\, \theta\in\mathbb{S}^{d-1}\right\}\subset {L}^p(K)$ is uniformly bounded. Here we are omitting the bias weight for simplicity of notation. However, we remark that for unbounded activations the bias parameters must also be properly clipped, see \cite{siegel2024}. 

The candidates that one wants to approximate are the elements of the closure of the convex hull of the dictionary $D_\phi$, see \cite{hornik1994}, given by
\begin{align*}
B_1(D_\phi):=\overline{\left\{\sum_{j=1}^n \omega_j\phi_j :\, \phi_j\in D_\phi,\, n\in\mathbb{N}, \, \omega_j\in\R,\; \sum_{j=1}^n |\omega_j|\le 1\right\}}.
\end{align*}

Focusing on approximating $(d+1)$-dimensional vector fields $b$ and $\sigma$ for which weak derivatives exists, here we apply the results of Siegel and Xu \cite{siegel2024} to the type-2 Banach space $W^{r,p}(K) $, where $p\geq2$, $K=[0,T]\times [-N,N]^d$, and $N$ is defined as in Lemma \ref{lemma:uniform-boundedness}. 
For the sake of completeness of our presentation, we summarize the result of \cite{siegel2024} applied into our situation. Note that here we leverage key insights from \cite{siegel2024}, while a comprehensive treatment of the techniques falls outside our present scope. Interested readers are directed to original paper and the references therein for complete details regarding approximation properties in a more general setting.

\begin{proposition}[Theorem 1-3, Siegel and Xu\cite{siegel2024}]\label{prop:siegel2024}
Let $\phi$ be a bounded activation function and $D_\phi\subset {L}^p([0,T]\times[-N,N]^d)$ the corresponding uniformly bounded dictionary. Then for $f\in B_1(D_\phi),$ we have
\begin{align}
\inf_{f_n\in \Sigma_{n,1}(\phi)}\Vert f-f_n\Vert_{W^{r,p}(K)}\leq \frac{C}{\sqrt{n}},\end{align}
where the constant $C>0$ depends on $\sup_{d\in D_\phi}\Vert d\Vert_{W^{r,p}(K)}$, $r$, $p$, and $K$, but not on $n.$
Moreover, in the case of $k$-ReLU activation there exists a constant $M = M(p, k, d) > 0$ such that, for all $f \in B_1(D_k)$, we have
\begin{align}\label{eq:estimation_ReLU_nn}
\inf_{f_n \in \Sigma_{n, M}(D_k)} \| f - f_n \|_{{L}^p([0,T]\times[-N,N]^d)} \le C n^{-\frac{1}{2} - \frac{pk+1}{p(d+1)}}.
\end{align}
\end{proposition}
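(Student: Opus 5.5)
This proposition collects Theorems 1--3 of \cite{siegel2024} and specializes them to $K=[0,T]\times[-N,N]^d$. The plan is therefore to check that our setting meets their hypotheses, not to reprove their approximation theory.

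The first bound comes from the Maurey--Jones--Barron sampling argument in a type-2 Banach space. For $p\ge 2$, the space $W^{r,p}(K)$ is of type 2, since it embeds isometrically into a finite product of $L^p$ spaces and $L^p$ has type 2 for $p\geq 2$. Take $f\in B_1(D_\phi)$. By density we may assume $f=\sum_j \omega_j d_j$ with $\sum_j|\omega_j|\le 1$ and $d_j\in D_\phi$. Write $f=\|\omega\|_1\,\E[\,\mathrm{sgn}(\omega_J) d_J]$, where $J$ is drawn with probability $|\omega_j|/\|\omega\|_1$. Now take $n$ i.i.d.\ copies and form the empirical mean $f_n=\frac{\|\omega\|_1}{n}\sum_{i=1}^n \mathrm{sgn}(\omega_{J_i})d_{J_i}$. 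Its coefficients satisfy $\sum|\cdot|\le 1$, so $f_n\in\Sigma_{n,1}(\phi)$. The type-2 inequality (Rademacher symmetrization followed by the type-2 bound) gives
\[
\E\Vert f-f_n\Vert_{W^{r,p}(K)}^2\le \frac{C_p}{n}\sup_{d\in D_\phi}\Vert d\Vert^2_{W^{r,p}(K)}.
\]
Hence some realization achieves the rate $C/\sqrt n$. Passing to the closure of the convex hull only changes the approximating sum by an arbitrarily small amount.

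The one point specific to our setting is that $\sup_{d\in D_\phi}\|d\|_{W^{r,p}(K)}<\infty$. This needs $\phi$ bounded together with its first $r$ derivatives, and the parameters $\theta$ restricted to a bounded set; the constant $C$ is allowed to depend on this supremum. Compactness of $K$, which the paper gets from the uniform boundedness lemma, makes the $L^p$ norms finite.

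The $k$-ReLU improvement is where the real difficulty lies, and I would import it directly rather than reprove it. The argument in \cite{siegel2024} gives the dictionary a smoothness structure, then uses a stratified or dyadic covering of the sphere $\mathbb{S}^{d}$ of parameters, which matches the dimension $d+1$ of $K$. On each cell the dictionary element is replaced by a local polynomial (Taylor) expansion plus a small remainder. Sampling is applied only to the remainders, and their variance is reduced by the $k$-fold smoothness of $(\cdot)_+^k$. Balancing the cell size against the sampling error produces the extra factor $n^{-\frac{pk+1}{p(d+1)}}$. The cost is output weights bounded by $M(p,k,d)$ instead of $1$.

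The only adaptation needed on our side is to rescale $K$ to the unit cube and the parameters to the sphere, which the bias clipping makes possible. This changes only the constants, and they depend on $N$ and $T$.
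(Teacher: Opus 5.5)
Your proposal is correct and matches the paper, which gives no independent argument but invokes Theorems 1--3 of \cite{siegel2024} in the type-2 space $W^{r,p}(K)$, $p\ge 2$, with input weights bounded so that $\sup_{d\in D_\phi}\Vert d\Vert_{W^{r,p}(K)}<\infty$ (the caveat you flag is the one the paper makes right after the statement), and likewise imports the $k$-ReLU rate. Your Maurey/type-2 sampling sketch is the standard proof of the first bound; the one detail worth adding is that the closure defining $B_1(D_\phi)$ must be read in $W^{r,p}(K)$, and for $1<p<\infty$ this agrees with the $L^p$-closure of a $W^{r,p}$-bounded convex set by reflexivity and Mazur's lemma.
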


\begin{remark}
Note that when we consider $f\in B_1(D_\phi)$, or $f\in B_1(D_k)$, we ensure the existence of a minimizing neural network. However, this does not guarantee its uniqueness or that a specific training algorithm will allow us to actually obtain such a neural network.    
\end{remark}

If we consider $b_n,\sigma_n\in\Sigma_{n,1}(D_\phi)$, for $\phi$ a bounded activation, assumptions \ref{H1}-\ref{H4} hold true; however, the Lipschitz constant is not bounded uniformly in $n$ unless the input weights $\theta$ are bounded. Taking a closer look at the proof of Theorem 1 in \cite{siegel2024}, it is easy to check that the theorem is valid if we change the dictionary in order to consider only bounded input weights/parameters:
\[D_\phi = \{\phi(\langle\theta,\cdot\rangle):~ \|\theta\|\leq \Theta\}\subset {L}^{p}(K;\R)\]
for some constant $\Theta>0$. Of course in this case the set of target functions changes with the -still uniformly bounded- dictionary.
Thus, with the help of the previous result and our main Theorem \ref{thm:main} we obtain the following corollary.

\begin{corollary}\label{cor:NN_case_rate}
Assume hypotheses of Theorem \ref{thm:main} hold true. Let $\phi$ be a bounded differentiable activation and $D_\phi$ the corresponding uniformly bounded dictionary with bounded input weights. Let $q,q_2,\rho\geq 2$ as in Theorem \ref{thm:main}. If $b,\sigma\in B_1(D_\phi)$, then 
\begin{align*}
\inf_{b_n,\sigma_n\in \Sigma_{n,1}(D_\phi)}\mathcal{E}_{\texttt{appr},\alpha}(n) &\leq  \frac{C(q,q_2,\rho)\, K_\phi}{\sqrt{n}},
\end{align*}
where $K_\phi = \sup_{d\in D_\phi}\Vert d\Vert_{W^{2,pq}(K)}$, with $p=q_2,\rho$, provided that $pq\ge2$.

\end{corollary}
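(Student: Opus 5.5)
We would combine the approximation bound of Theorem \ref{thm:main} with the Sobolev reformulation in Remark \ref{rem:sobolev}, and then invoke Proposition \ref{prop:siegel2024} with $r=2$. The target estimate for the right-hand side is $C\bigl[\Vert\tilde\sigma\Vert_{W^{2,q_2q}}+\Vert\tilde b\Vert_{W^{1,\rho q}}\bigr]\lesssim K_\phi n^{-1/2}$.

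\textbf{Step 1: admissibility of the approximants.} First we check that every $b_n,\sigma_n\in\Sigma_{n,1}(D_\phi)$, with input weights bounded by $\Theta$, satisfies \ref{H5}. Each unit $\phi(\langle\theta,\cdot\rangle)$ is bounded, and its gradient $\theta\phi'(\langle\theta,\cdot\rangle)$ is bounded by $\Theta\sup|\phi'|$. Since $\sum|\omega_j|\le1$, the Lipschitz and H\"older constants of the network are bounded by those of a single unit, uniformly in $n$. The linear growth bounds \ref{H2}--\ref{H3} hold trivially by boundedness. The second-order requirements in \ref{H1} (local Lipschitz continuity of $\nabla\sigma$ in $x$ and Lipschitz continuity of $\nabla\sigma$ in $t$) need $\phi'$ to be Lipschitz. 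This is implicit in the finiteness of $K_\phi=\sup_d\Vert d\Vert_{W^{2,pq}}$; alternatively, we assume $\phi\in C^2$ with bounded derivatives. Consequently the constant $C_\omega$ in Theorem \ref{thm:main} does not depend on the particular choice of $b_n,\sigma_n$ in $\Sigma_{n,1}$, and the compact set $\mathcal K=[0,T]\times[-N,N]^d$ from Lemma \ref{lemma:uniform-boundedness} is uniform as well.

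\textbf{Step 2: reduction to Sobolev norms.} For fixed admissible $b_n,\sigma_n$, Theorem \ref{thm:main} together with H\"older's inequality on the bounded domain (Remark \ref{rem:sobolev}) gives
\[
\mathcal E_{\texttt{appr},\alpha}(n)\le C\bigl[\Vert\sigma-\sigma_n\Vert_{W^{2,q_2q}(K)}+\Vert b-b_n\Vert_{W^{1,\rho q}(K)}\bigr].
\]
The mixed derivative $\partial_t\nabla_z$ is controlled by the full $W^{2,\cdot}$ norm. The mixed $L^{q_2}(L^q)$ norms are dominated by $L^{\max(q_2,q)}$-type norms on $K$; the exponent $q_2q$ dominates both, up to constants depending on $|K|$. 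Since $\Vert\cdot\Vert_{W^{1,\rho q}}\le\Vert\cdot\Vert_{W^{2,\rho q}}$, both terms are covered by $W^{2,pq}$ with $p\in\{q_2,\rho\}$.

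\textbf{Step 3: applying the neural approximation rate.} Taking the infimum over $\Sigma_{n,1}$ separately for $b_n$ and $\sigma_n$ (the two terms decouple), we apply Proposition \ref{prop:siegel2024} with $r=2$ and exponent $pq\ge2$, so that $W^{2,pq}$ is a type-2 space. This gives $\inf\Vert f-f_n\Vert_{W^{2,pq}}\le CK_\phi/\sqrt n$ for $f\in\{b,\sigma\}\subset B_1(D_\phi)$. The proposition remains valid for the bounded-weight dictionary, as noted before the corollary. Summing the two bounds yields the claim with $C=C(q,q_2,\rho)$.

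\textbf{Main obstacle.} The delicate point is tracking the constants. The factor $C_\omega$ from Theorem \ref{thm:main} must not depend on $n$ or on the specific minimizing network, which Step 1 secures through \ref{H5}. The Siegel--Xu constant must scale linearly in $\sup_d\Vert d\Vert$. If only near-minimizers exist, we would pick $f_n$ achieving the bound up to a factor of $2$.
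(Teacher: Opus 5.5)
Your proposal is correct and follows the paper's own argument. Theorem \ref{thm:main} together with H\"older's inequality on the bounded domain (Remark \ref{rem:sobolev}) reduces $\mathcal{E}_{\texttt{appr},\alpha}(n)$ to $\Vert\tilde\sigma\Vert_{W^{2,q_2q}}+\Vert\tilde b\Vert_{W^{1,\rho q}}$, and Proposition \ref{prop:siegel2024} with $r=2$ in the type-2 space $W^{2,pq}$, $pq\ge 2$, then gives the $K_\phi/\sqrt{n}$ rate; your Step 1 just makes explicit the uniform verification of \ref{H5}, which the paper only asserts before the corollary.

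One small correction to Step 1: finiteness of $K_\phi$ does not make $\phi'$ Lipschitz, since Sobolev embedding only gives H\"older continuity of $\phi'$. What actually secures \ref{H1} for $\sigma_n$ is your fallback assumption $\phi\in C^2$ with bounded derivatives, or simply \ref{H5} taken as part of the hypotheses of Theorem \ref{thm:main}.
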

To prove the corollary it is enough to notice that from Theorem \ref{thm:main} (see also Remark \ref{rem:sobolev}) we that
\begin{align*}
\inf_{b_n,\sigma_n\in \Sigma_{n,1}(D_\phi)}\mathcal{E}_{\texttt{appr},\alpha}(n) &\leq  C\inf_{b_n,\sigma_n\in \Sigma_{n,1}(D_\phi)} \left\{\Vert  \tilde \sigma\Vert_{W^{2,q_2 q}(K)}+ \Vert  \tilde b\Vert_{W^{1,\rho q}(K)}\right\}\leq \frac{C\, K_\phi}{\sqrt{n}},
\end{align*}
where the last inequality is obtained from Proposition \ref{prop:siegel2024}.

\begin{remark}
It is evident that when considering shallow neural network approximations for the drift coefficient 
$ b_n \in \Sigma_{n,M}(D_k) $, the assumptions in Section~\ref{sec:assumptions} are in general satisfied (in particular, the Lipschitz constants remain uniformly bounded). Moreover, the differentiability of $b_n$
holds in a weak sense, which is sufficient for the analysis of its Sobolev norm. This observation suggests the possibility of employing a mixed approximation scheme, with bounded activation functions for the diffusion term and ReLU activations for the drift. However, available estimates for ReLU activations are typically established 
in $ {L}^p $-spaces. Consequently, further results involving universal approximation theorems in Sobolev norms become relevant in this setting. 
\end{remark}

\begin{remark}[Approximation by deep neural networks]
In the multilayer case, Yarotsky \cite{yarotsky2017} provided error bounds for deep ReLU networks that approximate functions in Sobolev spaces $W^{r,\infty} $, providing convergence rates measured in the uniform norm ${L}^\infty $. Although the analysis does not cover the Sobolev norms directly, his work laid important groundwork on how network depth enables exponential approximation efficiency for certain function classes, motivating subsequent advances in Sobolev and Besov space approximations.
Later, Siegel \cite{siegel2024} establishes convergence rates for deep ReLU neural networks that target functions within Sobolev spaces, assuming that the bounded domain is the unit cube. For a fixed number of neurons (which depends on the dimension $d$), the results in \cite{siegel2024} provide sharp bounds for the ${L}^p$-norms of the approximation error that explicitly consider smoothness, dimension, and the number of layers. In addition, further results are presented on variations in both the number of layers and the number of units. As such, these works provide grounds for further practical applications of our approach.
\end{remark}

\pagebreak

\section{Numerical Experiments}\label{sec:numerics}

In this section, we conduct numerical experiments to study the 
the practical applicability of the proposed error-analysis framework for nonparametric estimation of fractional stochastic differential equations. Using controlled synthetic datasets, we illustrate how each source of error contributes to the overall error. We also examine the recovery of neural nonparametric estimators for both the drift and diffusion terms when trained solely on discrete-time observations.

In the following, we present the experimental setup, providing details on the data generation process, settings for experiments, and metrics for benchmarking.

\subsection{Data}\label{subsec:setup}

We consider the following one- and two-dimensional examples  
for the true equation
\begin{align*}
\mathrm{d}X_t
&= b(t,X_t)\,\mathrm{d}t
    + \sigma(t,X_t)\,\mathrm{d}B^H_t.
\end{align*}
\paragraph*{\bf 1D model.}
The coefficients functions are given by
\begin{align*}
b(t,x)
&= -x + \tfrac{1}{4}\tanh(x),\\
\sigma(t,x)
&= 0.5 + 0.2\,\tanh(x). 
\end{align*}

\paragraph*{\bf 2D model.}
The coefficient functions  the coefficients are given by
\begin{align*}
b(t,x) &= 
\begin{pmatrix} 
-0.8 & 0 \\ 
0 & -0.4 
\end{pmatrix} 
\begin{pmatrix} 
x_1 \\ 
x_2 
\end{pmatrix} + 
\begin{pmatrix} 
0.25 \sin(2\pi t) \\ 
0.2 \cos(2\pi t) 
\end{pmatrix} \\[10pt]
\sigma(t,x) &= 
\begin{pmatrix}
0.6 + 0.15 \tanh(x_1) & 0 \\
0 & 0.6 + 0.15 \tanh(x_2)
\end{pmatrix}.
\end{align*}

We consider Hurst parameters $H \in \{0.5,\,0.7,\,0.9\}$ to examine also the behaviour in terms of $H$ that controls both the memory effect as well as the path regularity. The case $H=0.5$ is included for comparison with more classical frameworks. 

Initial states are sampled as $X_0 \sim \mathcal{N}(0,I_d)$ and are clipped to $[-N,N]^d$, where $N>0$ is chosen a~priori large enough to contain more than $99.9\%$ of observed states during training. For each configuration, we generate $160$ independent trajectories: $100$ for training, $28$ for validation, and $32$ for testing.

The noise process $B^H$ is simulated on a uniform time grid using the Davies--Harte circulant-embedding method, which yields exact Gaussian increments on uniform meshes and is computationally efficient. For the time discretisation, we employ a coarse observation grid
$t_m = m\,\widehat{\Delta}$ with 
$0\le m \le M$, $M$
being the length of each observed trajectory. 
Within each coarse interval we simulate on a finer mesh with $\Delta t_{\mathrm{fine}} = \widehat{\Delta}/k$, where $k \in \N$ denotes the number of subintervals of each coarse step $\widehat{\Delta}$. Finally, we create data samples by using 
\begin{align}\label{eq:euler-maruyama}
X_{i+1}
&= X_i
 + b(t_i,X_i)\,\Delta t_{\mathrm{fine}}
 + \sigma(t_i,X_i)\,\Delta B^H_i,\\
\Delta B^H_i
&= B^H_{t_{i+1}} - B^H_{t_i},\quad \text{for all }i\in \{0,1,\ldots, kM-1\}. \nonumber
\end{align}
After simulating at the fine scale, we downsample to the coarse grid to obtain the observed trajectories
$
\{\widehat{X}_{t_m} : 0 \le m \le M\}.
$

\subsection{Estimation of $H$}\label{subsubsec:estH}
We estimate $H$ from observations on the coarse grid using a second-order increment ratio estimator on aligned dyadic refinements (see, e.g., \cite{Kubilius-Mishura-Ralchenko2017}). The resulting estimate is denoted by $\widehat{H}_M$ and is clipped to $(\tfrac12,0.99)$ to ensure Assumption \ref{H0}.

\subsection{Estimation of coefficient functions}
For the estimation of the coefficient functions $b$ and $\sigma$, we parameterise $b_n$ and $\sigma_n$ as single-hidden-layer neural networks with $\tanh$ activation and clipped weights and biases to enforce uniform Lipschitz bounds (consistent with Assumptions~\ref{H1}--\ref{H5}). For input $(t,x)\in[0,T]\times\R^d$ and hidden width $n$, set

\begin{align}
b_n(t,x)
&= W_2\,\phi\!\bigl(W_1(t,x)^\top + b_1\bigr) + b_2,\label{eq:bn-param}\\
\sigma_n(t,x)
&= \widetilde{W}_2\,\phi\!\bigl(\widetilde{W}_1(t,x)^\top + \widetilde{b}_1\bigr) + \widetilde{b}_2,\label{eq:sigman-param}
\end{align}
where $\phi = \tanh$. In the case $d=2$, $\sigma_n$ outputs the two diagonal entries of a diffusion matrix, positivity is enforced by a softplus transform or a positive clamping.

\textbf{Loss function.} Denote by $\{{X}_{t_m} : 0 \le m \le M\}$ and $\{\widehat{X}_{t_m} : 0 \le m \le M\}$ the realized and estimated trajectories, respectively. Discretised version of the fractional Sobolev-type norm for the error between simulated and realized trajectories is derived using
\begin{align}
\label{eq:frac-norm-discrete}
\norm{f}_{\alpha,\infty}&\approx\; \max_{0\le m\le M}\!\left(
\norm{f(t_m)} + \sum_{k=0}^{m-1}
\frac{\norm{f(t_m)-f(t_k)}}{\big((m-k)\,\widehat{\Delta}\big)^{\alpha+1}}\,\widehat{\Delta}
\right),\quad\\
f(t_m)&=\widehat{X}_{t_m}-X_{t_m},
\end{align}
where $\alpha\in(1-H,1/2)$. Intuitively, the first term, $ \norm{f(t_m)}$, captures the difference in drift/trend behavior while the second term $\sum_{k=0}^{m-1}
\frac{\norm{f(t_m)-f(t_k)}}{\big((m-k)\,\widehat{\Delta}\big)^{\alpha+1}}\,\widehat{\Delta}$ expresses the error related to the diffusion term by evaluating the weighted average of difference between the 
dissimilarity of two trajectories for different time lengths. Computing the expectation of the fractional Sobolev-type norm of the error, we use the loss function 

\begin{align}
\label{eq:expected-frac-norm}
\mathcal{L}_{}(\theta)=
\mathbb{E}\!\Bigg[
\max_{0\le m\le M}
\Bigg(
\|f(t_m)\|
+
\sum_{k=0}^{m-1}
\frac{\|f(t_m)-f(t_k)\|}
{\big((m-k)\,\widehat{\Delta}\big)^{\alpha+1}}
\,\widehat{\Delta}
\Bigg)
\Bigg].
\end{align}
to compute the gradients and update the parameters of the networks.

\textbf{Optimisation.}
We train using Adam optimizer with weight decay. Early stopping is based on validation of the fractional loss. Each batch consists of full trajectories. Gradient accumulation is used across multiple trajectories for memory efficiency. The hyperparameters are selected by validation. 
The full training procedure is presented in Algorithm \ref{alg:ours-opt}.

\begin{algorithm}[t]
\setlength{\parindent}{0pt}
\raggedright

\textbf{Input:}
 $\{X^{(j)}_{t_0},\ldots,X^{(j)}_{t_M}\}_{j=1}^N$, realized trajectories of a stochastic process until the time horizon $t_M$ with $M$ being the sample size and N being the number of trajectories,
coarse step $\widehat{\Delta}$,
fine step $\Delta t_{\mathrm{fine}}$,
learning rate $\eta$,
Hurst index $H$,
fractional order $\alpha\in(0,1/2)$.

\begin{algorithmic}
\STATE $L \leftarrow \widehat{\Delta}/\Delta t_{\mathrm{fine}} $
\end{algorithmic}

\textbf{Output:}
Optimized parameters $(\theta_b,\theta_\sigma)$ for drift and diffusion.

\textbf{Initialize:}
$\theta \leftarrow (\theta_b,\theta_\sigma)$.

\begin{algorithmic}
\WHILE{not converged}
        \STATE $\mathcal{L}(\theta) \leftarrow 0$.
        \FOR{$j = 1,\ldots,N$}
            \STATE Simulate $\{\hat X^{(j)}_{t_{m,\ell}}\}_{m=0,\ldots,M;\,\ell=0,\ldots,L-1}$ on the fine grid with $\theta_b$, $\theta_\sigma$.
            \STATE Form path error $f^{(j)}(t_m) \leftarrow \hat X^{(j)}_{t_{m,L-1}}\ - X^{(j)}_{t_m}$.
            \STATE Compute fractional path loss $\|f\|^{(j)}_{\alpha,\infty}$
             using the loss formula in \ref{eq:frac-norm-discrete} as
            \STATE $\mathcal{L}(\theta) \leftarrow \mathcal{L}(\theta) + \|f\|^{(j)}_{\alpha,\infty}$.
        \ENDFOR
        \STATE $\mathcal{L}(\theta) \leftarrow \mathcal{L}(\theta)/N$.
        \STATE $\theta \leftarrow \theta - \eta \nabla_\theta \mathcal{L}(\theta)$.
\ENDWHILE
\end{algorithmic}

\caption{Learning drift and diffusion via fractional path loss}
\label{alg:ours-opt}
\end{algorithm}

\subsection{Metrics and Evaluation}
\label{subsec:metrics}

We evaluate parameter recovery of coefficients and dynamical fidelity on held-out test trajectories. To assess the accuracy of the estimated coefficients we report empirical $L^2$ and relative $L_{rel}^2 $ errors for drift and diffusion, under uniform sampling scheme over $K=[0,T]\times[-N,N]^d$. That is, we consider

\begin{align*}
L^{2}(b) &= \sqrt{\frac{1}{M} \sum_{i=1}^{M} \|b_n(t_i, X_{t_i}) - b(t_i, X_{t_i})\|^2}, \\
L^{2}(\sigma) &= \sqrt{\frac{1}{M} \sum_{i=1}^{M} \|\sigma_n(t_i, X_{t_i}) - \sigma(t_i, X_{t_i})\|^2},
\end{align*}

\begin{align*}
L^{2}_{\text{rel}}(b) &= \sqrt{\frac{1}{M} \sum_{i=1}^{M} \left( \frac{\|b_n(t_i, X_{t_i}) - b(t_i, X_{t_i})\|}{\|b(t_i, X_{t_i})\|} \right)^2}, \\[10pt]
L^{2}_{\text{rel}}(\sigma) &= \sqrt{\frac{1}{M} \sum_{i=1}^{M} \left( \frac{\|\sigma_n(t_i, X_{t_i}) - \sigma(t_i, X_{t_i})\|}{\|\sigma(t_i, X_{t_i})\|} \right)^2}.
\end{align*}
We measure the fractional Sobolev norm error, \eqref{eq:frac-norm-discrete}, across multiple test trajectories and report the mean and standard deviation.

\subsection{Results}
\label{subsec:mainresults}

\paragraph*{\bf Function Recovery}

Figure~\ref{fig:heatmaps_1d} reports the one-dimensional results for Hurst indices $H\in\{0.5,0.7,0.9\}$. Comparing the estimated drift and diffusion functions in Figure~\ref{fig:heatmaps_1d}b(f), Figure~\ref{fig:heatmaps_1d}c(g), and Figure~\ref{fig:heatmaps_1d}d(h) (corresponding to $H=0.5$, $0.7$, and $0.9$, respectively) against the ground-truth coefficients in Figure~\ref{fig:heatmaps_1d}a(e) demonstrates that the coefficients are recovered accurately across all three settings.

Table~\ref{tab:part1_hursts2_hidden128_rel} complements this qualitative evidence by aggregating performance over multiple observed trajectories, thereby assessing robustness to randomness. As the Hurst index increases, sample paths become smoother, which typically yields smaller fractional path errors and more stable drift estimates. At the same time, the reduced stochastic variability makes the diffusion estimation task less informative: increased temporal regularity lowers path roughness and can therefore lead to weaker learning signals for the diffusion network.

\begin{figure}[h]
\centering
\includegraphics[width=\textwidth,height=0.36\textheight]{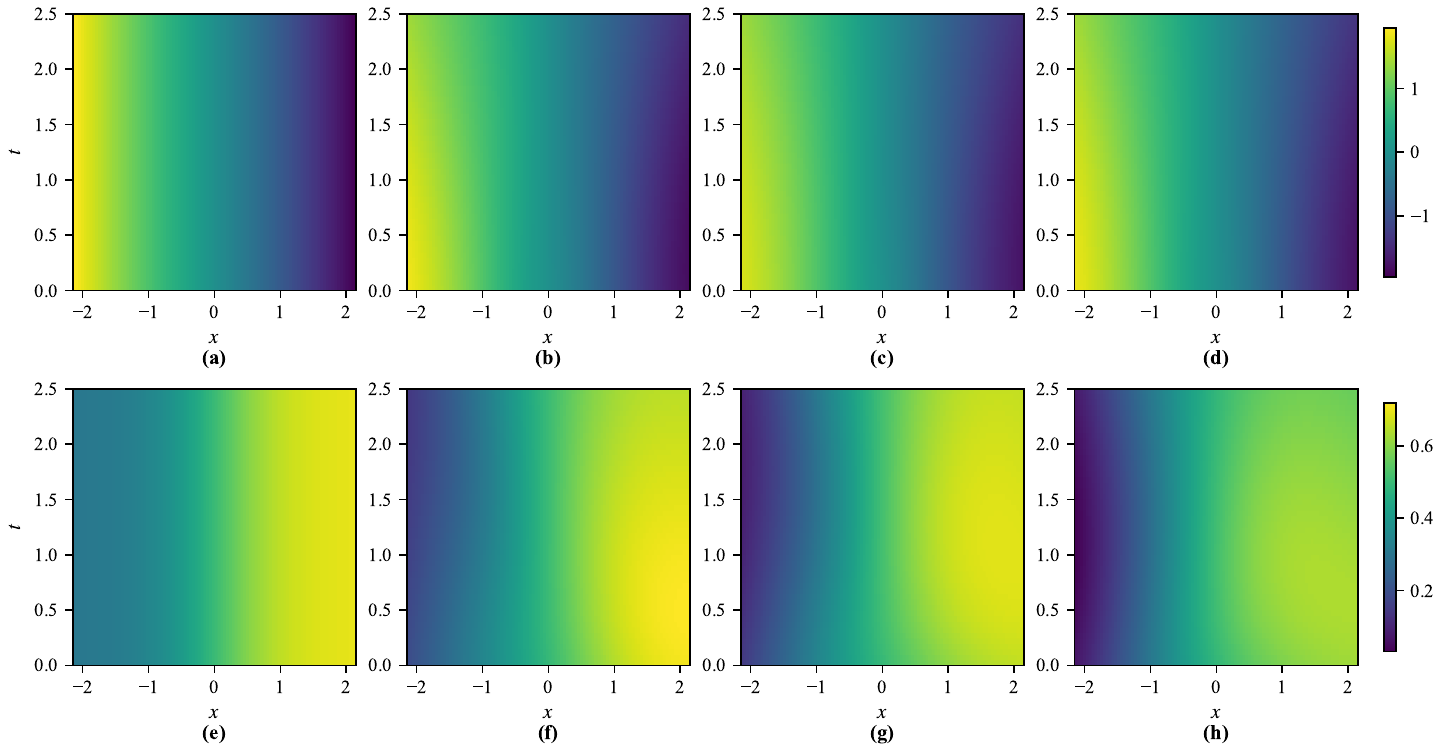}
\caption{The drift (first row) and diffusion (second row) values for both scalar fields: the ground truth (first column) and their estimates in the 1D case. 
The second, third and fourth columns correspond to estimation for Hurst exponents of $0.5$, $0.7$ and $0.9$, respectively.}
\label{fig:heatmaps_1d}
\end{figure}

\begin{table}[htbp]
    \centering
    \renewcommand{\arraystretch}{1.4}
    \caption{Mean and standard deviations of fractional Sobolev
norm and drift/diffusion $L^2$ and $L_{rel}^2 $ errors in 1D settings with $\widehat{\Delta}=0.05$, $\Delta t_{\mathrm{fine}}=\widehat{\Delta}/4$, and hidden width $n=128$.}
    \vspace{0.7em}
    {\footnotesize
    \begin{tabularx}{0.95\textwidth}{r *{5}{>{\centering\arraybackslash}X}}
        \hline
        \textbf{$H$} &
        \textbf{$\norm{f}_{\alpha,\infty}$} &
        \textbf{$L^{2}(b)$} &
        \textbf{$L^{2}(\sigma)$} &
        \textbf{$L^{2}_{rel.}(b)$} &
        \textbf{$L^{2}_{rel.}(\sigma)$} \\
        \hline
        0.5 & $0.0424 \pm 0.0150$ & $0.0136 \pm 0.0061$ & $0.0014 \pm 0.0004$ & $0.0129 \pm 0.0058$ & $0.0050 \pm 0.0016$ \\
        0.7 & $0.0329 \pm 0.0106$ & $0.0185 \pm 0.0038$ & $0.0020 \pm 0.0010$ & $0.0177 \pm 0.0036$ & $0.0073 \pm 0.0038$ \\
        0.9 & $0.0184 \pm 0.0059$ & $0.0157 \pm 0.0040$ & $0.0043 \pm 0.0023$ & $0.0150 \pm 0.0039$ & $0.0159 \pm 0.0084$ \\
        \hline
    \end{tabularx}
    }
    \label{tab:part1_hursts2_hidden128_rel}
\end{table}

Applying the same training framework in the two-dimensional setting yields analogous behavior. Table~\ref{tab:part1_hursts2_hidden128_rel_2d} reports the mean performance and standard deviation computed over multiple optimizations with different initial values, while 
Figure~\ref{fig:heatmaps_2d} depicts one of the estimated drift and diffusion coefficients for different Hurst indices, alongside the ground truth.

Figure~\ref{fig:heatmaps_2d} indicates that diffusion estimation becomes less accurate as the state moves away from the origin: in these regions the drift term dominates the dynamics, making diffusion-related weight updates less effective. This phenomenon is further highlighted in Figure~\ref{fig:trajs_2d}, which compares true and estimated trajectories for a fixed Brownian path. In particular, near the origin, smaller Hurst values produce more diffusion-driven behavior and stronger oscillations, providing a richer learning signal and resulting in more accurate diffusion estimates.

\begin{figure}[h]
\centering
\includegraphics[width=\textwidth,height=0.36\textheight]{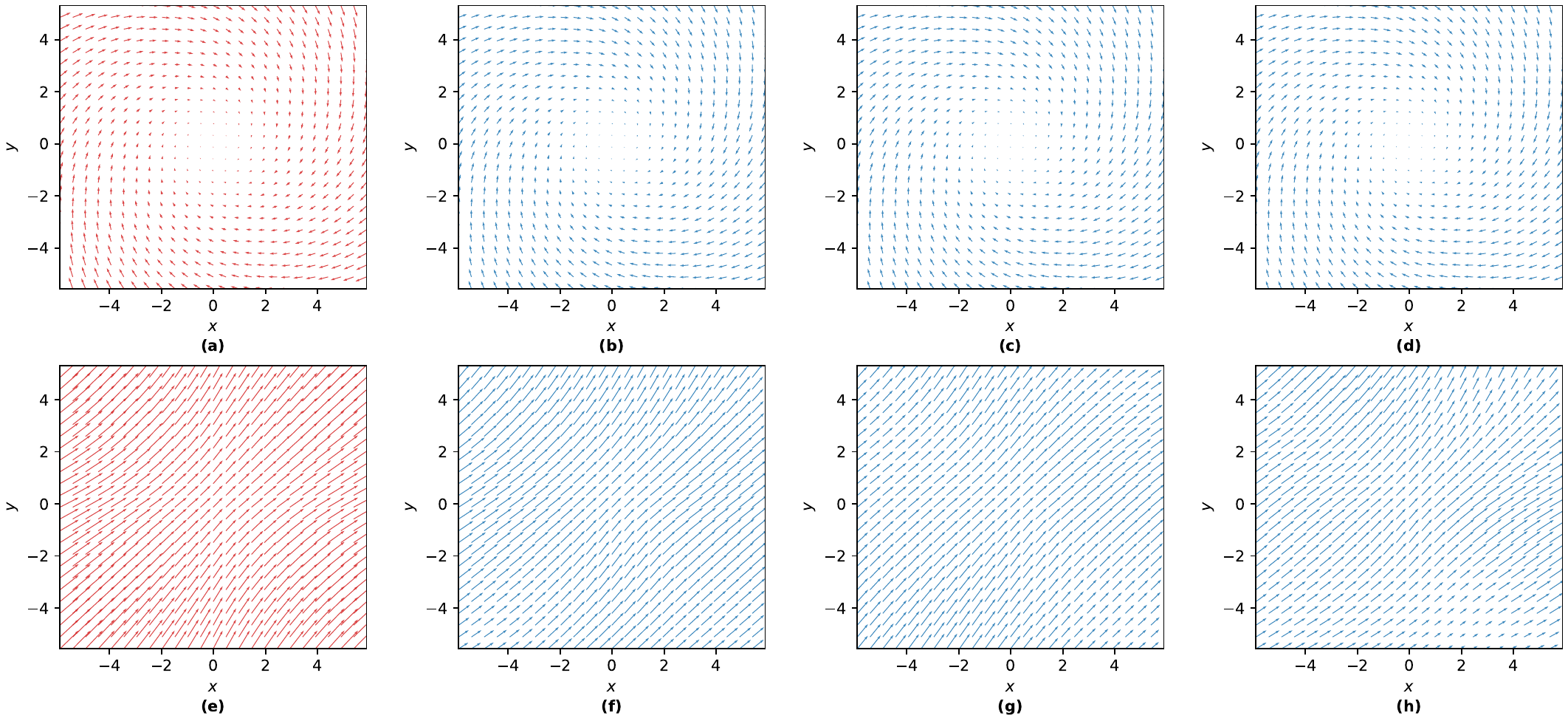}
\caption{The drift (first row) and diffusion (second row) values for both vector fields: the ground truth (first column) and their corresponding estimates in the 2D case. The second, third and fourth columns correspond to estimation for Hurst exponents of $0.5$, $0.7$ and $0.9$, respectively.}
\label{fig:heatmaps_2d}
\end{figure}

\begin{figure}[h]
\centering
\includegraphics[width=\textwidth,height=0.46\textheight]{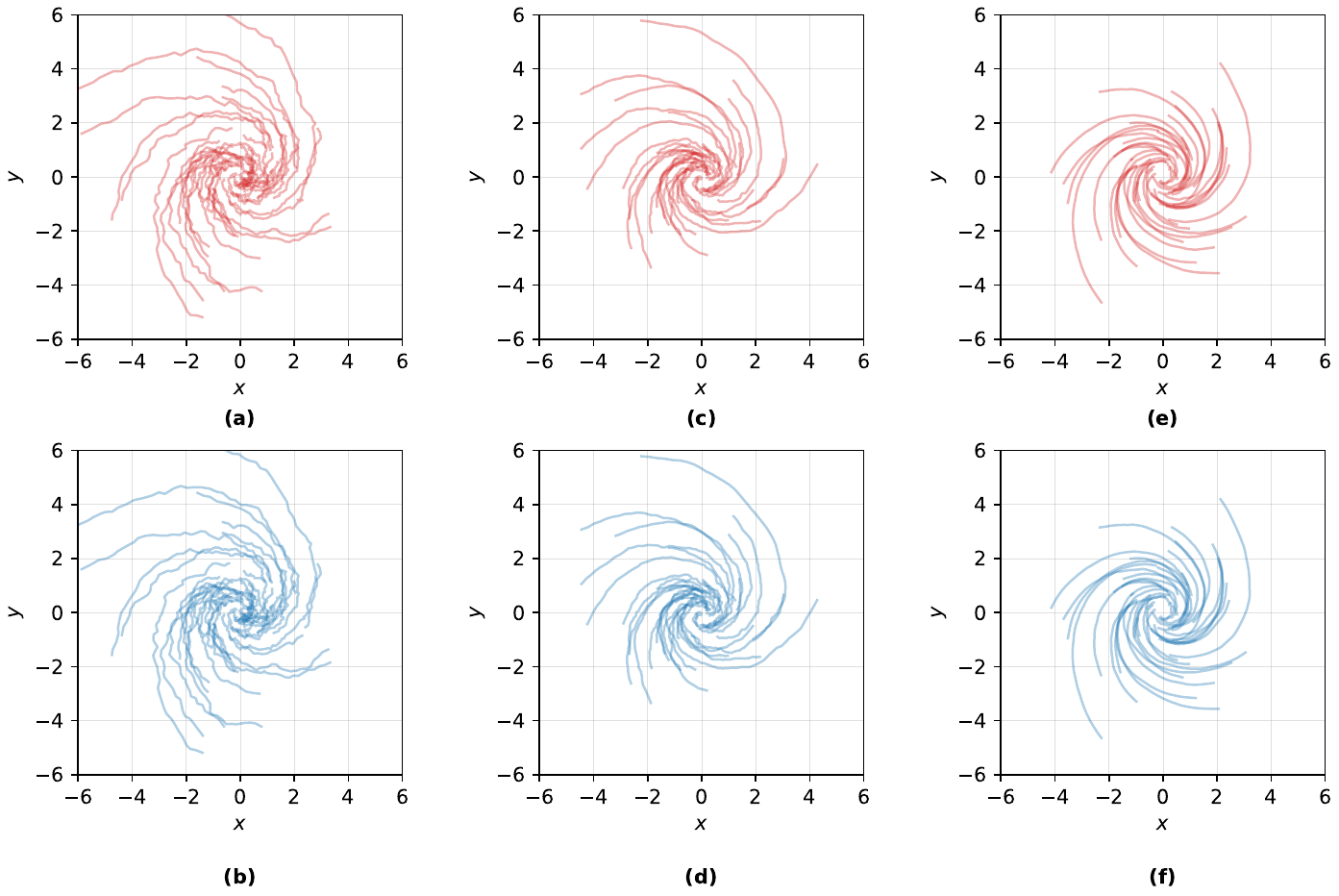}
\caption{Two-dimensional trajectories for the ground truth (top row) and the estimated models (bottom row). Columns correspond to $H=0.5$, $H=0.7$ and $H=0.9$, respectively.}
\label{fig:trajs_2d}
\end{figure}

\begin{table}[htbp]
    \centering
    \renewcommand{\arraystretch}{1.4}
    \caption{
Mean and standard deviations of fractional Sobolev
norm and drift/diffusion $L^2$ and $L_{rel}^2 $ errors in 2D settings with
    $\widehat{\Delta}=0.05$, $\Delta t_{\mathrm{fine}}=\widehat{\Delta}/4$, and hidden width $n=256$.}
    \vspace{0.7em}
    {\footnotesize
    \begin{tabularx}{0.95\textwidth}{r *{5}{>{\centering\arraybackslash}X}}
        \hline
        \textbf{$H$} &
        \textbf{$\norm{f}_{\alpha,\infty}$} &
        \textbf{$L^{2}(b)$} &
        \textbf{$L^{2}(\sigma)$} &
        \textbf{$L^{2}_{rel.}(b)$} &
        \textbf{$L^{2}_{rel.}(\sigma)$} \\
        \hline
        0.5 & $0.6367 \pm 0.2469$ & $1.4088 \pm 0.3531$ & $0.0118 \pm 0.0031$ & $0.0980 \pm 0.0116$ & $0.0572 \pm 0.0139$ \\
        0.7 & $0.4933 \pm 0.2426$ & $1.4156 \pm 0.3563$ & $0.0131 \pm 0.0015$ & $0.0995 \pm 0.0125$ & $0.0650 \pm 0.0110$ \\
        0.9 & $0.3897 \pm 0.1301$ & $1.3790 \pm 0.2297$ & $0.0142 \pm 0.0022$ & $0.0985 \pm 0.0055$ & $0.0695 \pm 0.0083$ \\
        \hline
    \end{tabularx}
    }
    \label{tab:part1_hursts2_hidden128_rel_2d}
\end{table}

\paragraph*{\bf Empirical Error Decomposition}

As shown in our main result (Theorem~\ref{thm:main}), the overall approximation error admits a decomposition into three components: the fitting error, the time-discretization error, and the coefficient-approximation error. In this section, we present a set of experiments on one-dimensional data to illustrate these contributions. During the whole experiments in this section, we fix the fine time step to $\Delta t_{\mathrm{fine}} = 0.05$, $\Delta t_{\mathrm{fine}}=\Delta t_{\mathrm{coarse}}/4$ and use the oracle value $H = 0.7$.

\medskip
 For shallow networks with bounded activation functions, Corollary~\ref{cor:NN_case_rate} predicts a decay rate of order $n^{-1/2}$, which is consistent with the trend observed in Figure~\ref{fig:frac_error_vs_neurons}. Table~\ref{tab:nets_hurst07} further confirms this behavior: wider networks achieve smaller fractional path errors, reflecting more accurate approximation of the drift and diffusion coefficients.

\begin{figure}[h]
\centering
\includegraphics[width=0.85\textwidth,height=0.36\textheight]{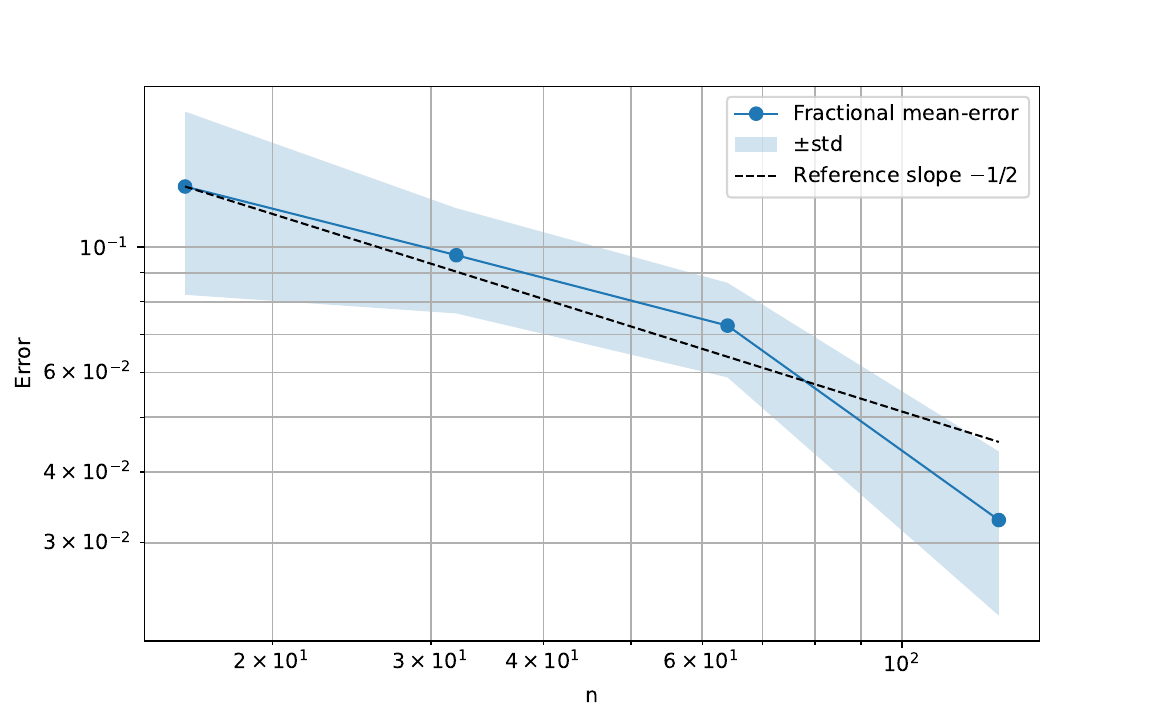}
\caption{Mean and standard deviations of validation losses versus hidden-layer width in loglog-scale in 1D setting with $H=0.7$, $\widehat{\Delta}=0.05$, and $\Delta t_{\mathrm{fine}}=\widehat{\Delta}/4$. Reference slope $n^{-\frac12}$ is shown in black. }
\label{fig:frac_error_vs_neurons}
\end{figure}
\begin{table}[htbp]
    \centering
    \renewcommand{\arraystretch}{1.4}
    \caption{Mean and standard deviations of validation losses and function errors for each hidden-layer width in 1D case. We have used $H=0.7$, $\widehat{\Delta}=0.05$, and $\Delta t_{\mathrm{fine}}=\widehat{\Delta}/4$.}
    \vspace{0.7em}
    {\footnotesize
    \begin{tabularx}{0.65\textwidth}{r *{3}{>{\centering\arraybackslash}X}}
        \hline
        \textbf{Width} &
        \textbf{$\norm{f}_{\alpha,\infty}$} &
        \textbf{$L^{2}(b)$} &
        \textbf{$L^{2}(\sigma)$} \\
        \hline
         8   & $0.5591 \pm 0.0888$ & $0.0428 \pm 0.0122$ & $0.0989 \pm 0.0277$ \\
        16   & $0.1279 \pm 0.0456$ & $0.0508 \pm 0.0106$ & $0.0172 \pm 0.0187$ \\
        32   & $0.0967 \pm 0.0204$ & $0.0388 \pm 0.0014$ & $0.0047 \pm 0.0018$ \\
        64   & $0.0726 \pm 0.0138$ & $0.0377 \pm 0.0004$ & $0.0026 \pm 0.0011$ \\
        128  & $0.0329 \pm 0.0106$ & $0.0185 \pm 0.0038$ & $0.0020 \pm 0.0010$ \\
        \hline
    \end{tabularx}
    }
    \label{tab:nets_hurst07}
\end{table}

To evaluate the fitting error arising from the estimation of Hurst index $H$, we consider several univariate settings with a fixed number of trajectories and varying numbers of observation points $M$. As $M$ increases, we expect the Hurst estimator to become more accurate, which should in turn reduce the error measured in the fractional-Sobolev norm. To isolate this effect, we do not re-estimate the drift and diffusion coefficients. Instead, after estimating the Hurst index from the simulated data, we use the true drift and diffusion functions. 

We repeat this procedure across multiple values of $M$ for the fixed parameters $H=0.7$, $\Delta t_{\mathrm{fine}}=\Delta t_{\mathrm{coarse}}/4$, and $N=2000$. Figure~\ref{fig:frac_error_vs_M} shows that the fitting error decreases as $M$ increases. We also include the corresponding theoretical upper bound for the fitting error as a function of $M$, as stated in Remark~\ref{remark:fitting-error}, showing that our numerical results are aligned with the theoretical upper bound.

\begin{figure}[h]
\centering
\hspace{-0.5cm} 
\includegraphics[width=0.77\textwidth,height=0.34\textheight]{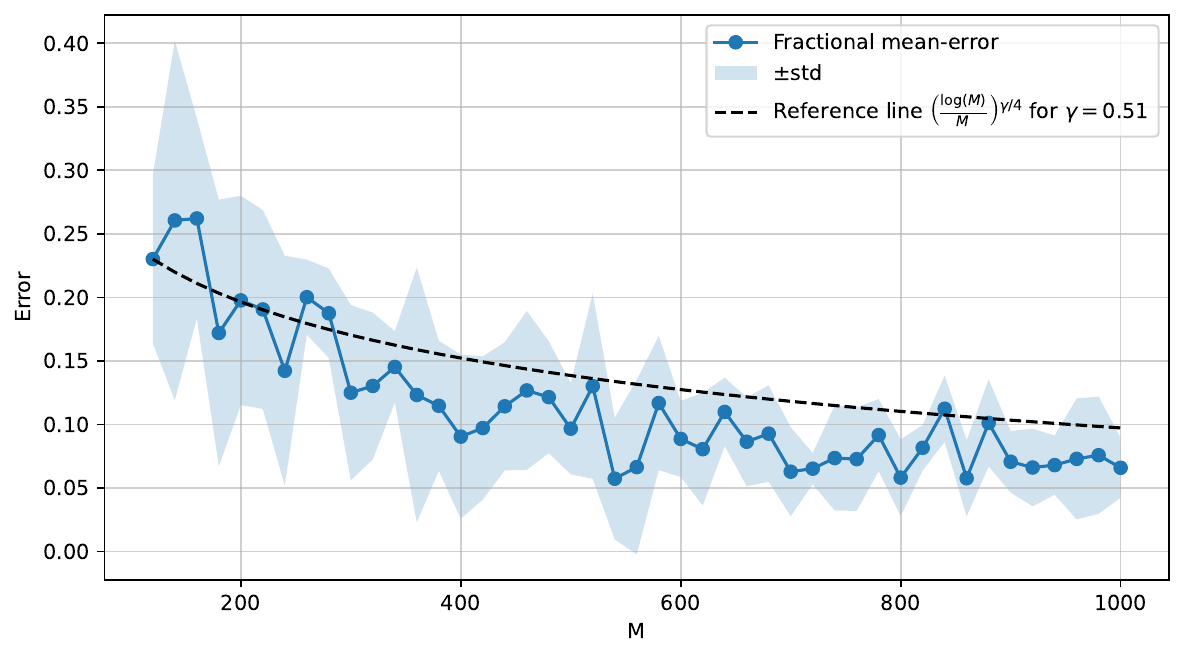}
\caption{Mean and standard deviations of fitting error as a function of $M$ in 1D setting with $H=0.7$, $\hat\Delta = 0.05$, $\Delta t_{\text{fine}} = \hat\Delta/4$. Theoretical upper bound $ \left(\frac{\log(M)}{M}\right)^{\gamma/4}$ for $\gamma = 0.51$ is shown in black.}
\label{fig:frac_error_vs_M}
\end{figure}

\pagebreak

\appendix

\section{Proof of Theorem \ref{thm:main}}\label{sec:apprendix}
\label{sec:proofs}
We split the proof into a series of lemmas and three propositions dealing with different sources of error. We begin with the time-discretisation error that follows directly from the existing results presented in the literature. Our main contributions consider approximation and fitting errors, dealt in Sections \ref{subsubsec:fitting} and \ref{subsubsec:approximation}.

\subsection{\bf Time-approximation error}
\label{subsubsec:time}
The time-discretisation error
\[\mathcal{E}_{\texttt{time},\alpha}(\Delta t) = \|{X}^{n,\hat{H}_M} - X^{n,\hat{H}_M,\Delta t}\|_{\alpha,\infty}\]
corresponding either to numerical errors arising from computations or from the fact that in practice one typically only have discrete but dense (high-frequency) observations. We note that the subject is well-studied in the literature, and the following result is a direct consequence of~\cite[Theorem 3.1]{Mishura-Shevchenko2008}, re-written in our context.
\begin{proposition}
\label{prop:time_error}
Let $(X_t^{n,M}; t\in[0,T])$ be the solution of Equation \eqref{eq:coefficient-approx}, and $(X_t^{n,\Delta t, M}; t \in[0,T])$ its Euler-Maruyama approximation with time-step $\Delta t$, and fix the estimator $\HeM\in[\underline{H},\overline{H}]$. Assume \ref{H5} holds true for $b_n,\sigma_n$ and $\HeM$.
Then, for any $\eta\in(0,1)$ and $\epsilon_0>0$ sufficiently small there exists a time-step $\Delta t_0$, an event $\Omega_{\eta,\epsilon_0}\subseteq\Omega$, and a random constant $C>0$ such that $\P(\Omega_{\eta,\epsilon_0})>1-\eta$ and 
\begin{align}\label{eq:rate_EM}
\mathcal{E}_{\texttt{time},\alpha}(\Delta t) &\le C\,  (\Delta t)^{2\overline{H}_M-1-\epsilon_0}, \text{ for all }\Delta t \le \Delta t_0,
\end{align} 
with $C$ independent on $\Delta t$.
\end{proposition}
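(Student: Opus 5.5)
The plan is to reduce the statement to \cite[Theorem 3.1]{Mishura-Shevchenko2008}, applied pathwise. That theorem treats the Euler approximation of an SDE driven by fBm with Hurst index in $(\frac12,1)$ under Nualart--Răşcanu type coefficient conditions. We freeze the estimator $\HeM\in[\underline H,\overline H]$ and regard \eqref{eq:coefficient-approx} as an SDE with coefficients $b_n,\sigma_n$ driven by the fBm $B^{\HeM}$; by the same-noise convention this is the Mandelbrot--van Ness transform of the fixed $W$ with kernel $K_{\HeM}$. The first step is to check the hypotheses of that theorem: global Lipschitz continuity and boundedness of $\sigma_n$ and $\nabla\sigma_n$ (local Lipschitz suffices on a compact set containing the paths), Lipschitz continuity of $b_n$ in $x$, and Hölder regularity in $t$. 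All of these follow from \ref{H1}--\ref{H4} via \ref{H5}, with constants uniform in $n$. Together with the a priori bound of Lemma~\ref{lemma:uniform-boundedness} confining the trajectories of $X^{n,\HeM}$ and of its Euler interpolation to a compact $[-N,N]^d$, this lets us replace local constants by global ones.

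Next we recall the pathwise structure of the Mishura--Shevchenko estimate. For a driver $g$ with $\|g\|_{1,1-\alpha}$ finite, their proof gives a deterministic bound
\[
\|X^{n,\HeM}-X^{n,\HeM,\Delta t}\|_{\alpha,\infty}\le C\big(\|g\|_{1-\alpha'}\text{-Hölder norm},T,\text{constants}\big)\,\Delta t^{2(1-\alpha')-1}
\]
for any $\alpha'\in(1-\HeM,\frac12)$. Here $C$ depends on exponential-type functionals of the Hölder seminorm of the noise, which arise from a Gronwall argument in the weighted norm $\|\cdot\|_{\alpha,\lambda}$ with $\lambda$ tied to that seminorm. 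We choose $\alpha'=1-\HeM+\epsilon_0/2$. This choice gives the exponent $2\HeM-1-\epsilon_0$, and since $\HeM\ge\underline H$ the bound can be written uniformly in the form stated in \eqref{eq:rate_EM}.

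The probabilistic step handles the random constant. The $(\HeM-\epsilon_0/2)$-Hölder seminorm $\Lambda$ of $B^{\HeM}$ on $[0,T]$ is a.s. finite, and by Garsia--Rodemich--Rumsey and Fernique it even has Gaussian-type moments. For given $\eta$ we take $R$ with $\P(\Lambda>R)<\eta$ and set $\Omega_{\eta,\epsilon_0}=\{\Lambda\le R\}$. On this event the constant is bounded by a deterministic $C(R)$, and the smallness requirement $\Delta t\le\Delta t_0$ that appears in the Euler analysis (needed so that the discrete Gronwall and contraction arguments close) can be chosen depending only on $R$. If $\HeM$ is random, we apply this conditionally on a fixed value, using uniformity over $[\underline H,\overline H]$.

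The main obstacle is showing that the constants in \cite{Mishura-Shevchenko2008} depend on the coefficients only through the Lipschitz, Hölder and growth constants, which are uniform in $n$ by \ref{H5}. We also have to translate their error norm into our $\|\cdot\|_{\alpha,\infty}$. For the translation we would bound the Besov-type term $\int_0^t|f(t)-f(s)|(t-s)^{-\alpha-1}ds$ by the Hölder norm of the error process of order slightly above $\alpha$, absorbing the resulting loss into $\epsilon_0$.
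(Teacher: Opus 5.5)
Your proposal is correct and follows the paper's route. The paper also treats the statement as a direct consequence of \cite[Theorem 3.1]{Mishura-Shevchenko2008} for the fixed parameter $\HeM$, and its only added point is exactly your ``main obstacle'': the constants (and hence $\Delta t_0$) depend on $b_n,\sigma_n$ only through Lipschitz and H\"older constants, which are uniform in $n$ by \ref{H5}. Your extra details (the event $\{\Lambda\le R\}$, the choice of $\alpha'$, the norm translation) make explicit what the paper leaves inside that citation, but note that Lemma~\ref{lemma:uniform-boundedness} only confines $X^{n,\HeM}$ and not its Euler interpolation, whose boundedness has to come from the Euler analysis itself.
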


Notice that, in addition to assumptions in \cite[Theorem 3.1]{Mishura-Shevchenko2008},  we only need to assume that the constants arising from the Lipschitz and H\"older continuity properties in \ref{H0}-\ref{H3} are uniformly bounded with respect to \( n \). This implies that convergence is independent of \( n \). More precisely, we can find a sufficiently small \( \Delta_0 \) that does not depend on \( n \), such that for any \( \Delta t \leq \Delta_0 \), convergence in \eqref{eq:rate_EM} still holds.

\subsection{\bf Fitting error}
\label{subsubsec:fitting}
For the sake of simplicity and without loss of generality, we only consider the case $d=1$. The general case follows by considering the below reasoning componentwise. 

We first quantify, in terms of the norm $\|\cdot\|_{1,1-\alpha}$, the distance between two fractional Brownian motions $B^{H_1}$ and $B^{H_2}$, corresponding to deterministic Hurst parameters $H_1$ and $H_2$. This preliminary estimate will play a key role in the analysis of the fitting error under parameter uncertainty.

\begin{lemma}\label{lem:brownian_convergence}
Let $H_1,H_2$ satisfying Assumption \ref{H0}. Then for any $\alpha\in(1-\underline{H}, 1/2)$ there exists a constant $C$ with all the moments finite such that
\begin{align}
&|B^{H_1} - B^{H_2}\|_{1,1-\alpha} \leq C|H_1-H_2|^{\frac12}.
\end{align}
\end{lemma}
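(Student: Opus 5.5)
We need to bound $\|B^{H_1}-B^{H_2}\|_{1,1-\alpha}$ by $C|H_1-H_2|^{1/2}$ with $C$ a random variable having all moments. The plan is to apply the Garsia--Rodemich--Rumsey (GRR) inequality to the centered Gaussian process $Y=B^{H_1}-B^{H_2}$. Both fBm's are built from the same Brownian motion $W$ through the Mandelbrot--van Ness representation
\[B^H_t=c_H\int_{\R}\big((t-u)_+^{H-1/2}-(-u)_+^{H-1/2}\big)\,dW_u .\]

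\textbf{Step 1: increment variance.} The key estimate is the second moment of the increments of $Y$. By the It\^o isometry,
\[\E|Y_t-Y_s|^2=\int_{\R}\big(g_{H_1}(t,s,u)-g_{H_2}(t,s,u)\big)^2du,\]
where $g_H(t,s,u)=c_H\big((t-u)_+^{H-1/2}-(s-u)_+^{H-1/2}\big)$. I would prove the two-sided control
\[\E|Y_t-Y_s|^2\le C\min\big(|t-s|^{2\underline H},\;|H_1-H_2|\,|t-s|^{2\underline H-\epsilon}\big),\]
for small $\epsilon>0$, uniformly over $H_1,H_2$ in the compact range of Remark \ref{rem:H_bounds}.

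The first bound is trivial, since each fBm separately has increment variance $|t-s|^{2H_i}\le C|t-s|^{2\underline H}$ on $[0,T]$. For the second, write $g_{H_1}-g_{H_2}=\int_{H_2}^{H_1}\partial_H g_H\,dH$ and use Cauchy--Schwarz in $H$. The derivative is
\[\partial_H g_H=c_H'(\cdots)+c_H\big((t-u)_+^{H-1/2}\log(t-u)_+-(s-u)_+^{H-1/2}\log(s-u)_+\big).\]
By self-similarity (substitute $u=s+(t-s)v$), its $L^2(du)$ norm is bounded by $C|t-s|^{H}(1+|\log|t-s||)$, which is at most $C|t-s|^{\underline H-\epsilon/2}$. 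This gives $\E|Y_t-Y_s|^2\le C|H_1-H_2|^2|t-s|^{2\underline H-\epsilon}$.

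This is actually stronger than needed: it yields $|H_1-H_2|^2$ where only $|H_1-H_2|$ is required. Interpolating with the trivial bound, or simply using $|H_1-H_2|\le 1$, gives the stated form. The power $1/2$ in the lemma arises after taking square roots. The delicate point is that the integrability of the log-weighted kernel near $u=t$, $u=s$ and at $-\infty$ must hold uniformly in $H$. This is true since $H-3/2<-1/2$ at infinity and $H-1/2>0$ locally.

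\textbf{Step 2: GRR and moments.} By Gaussianity,
\[\E|Y_t-Y_s|^p\le C_p|H_1-H_2|^{p/2}|t-s|^{p(\underline H-\epsilon/2)}.\]
Apply GRR with $\Psi(x)=|x|^p$ and $p(u)=u^{\beta+2/p}$, where $\beta\in(1-\alpha,\underline H-\epsilon/2)$. This interval is nonempty because $\alpha>1-\underline H$ and $\epsilon$ is small. GRR gives
\[|Y_t-Y_s|\le C\,\xi\,|t-s|^{\beta},\qquad \xi^p=\int\!\!\int\frac{|Y_u-Y_v|^p}{|u-v|^{\beta p+2}}\,du\,dv .\]
Taking expectations, $\E\xi^p\le C_p|H_1-H_2|^{p/2}$ for $p$ large. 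Set $C:=\xi/|H_1-H_2|^{1/2}$ (and $C:=0$ when $H_1=H_2$). Then $C$ has finite moments of all orders, with bounds independent of $H_1,H_2$.

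\textbf{Step 3: the $\|\cdot\|_{1,1-\alpha}$ norm.} Since $\beta>1-\alpha$, the H\"older bound controls both parts of the norm. The first part satisfies $|Y_t-Y_s|/|t-s|^{1-\alpha}\le C'\xi$ directly. For the second,
\[\int_s^t\frac{|Y_t-Y_r|}{(t-r)^{2-\alpha}}\,dr\le \xi\int_s^t (t-r)^{\beta-2+\alpha}\,dr\le C\xi T^{\beta+\alpha-1},\]
which is finite since $\beta+\alpha-1>0$. Combining the three steps yields the lemma.

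The main obstacle I anticipate is Step 1: obtaining the $H$-derivative estimate uniformly in $H$, including the derivative of the normalizing constant $c_H$ and the logarithmic factors.
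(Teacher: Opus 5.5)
Your proof is correct. It follows the paper's skeleton: a second-moment bound for increments of $B^{H_1}-B^{H_2}$, then H\"older continuity of the normalized difference with a random constant having all moments, then domination of $\|\cdot\|_{1,1-\alpha}$ by a H\"older norm of order $>1-\alpha$. Your Step 1, however, uses a different and sharper argument than the paper's.

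\textbf{The paper's route.} It computes the cross-covariance of the two Mandelbrot--van Ness integrals in closed form via Gamma/Beta identities. It then writes $T_{s,t}=v^{2H_2}(1-v^{H_1-H_2})^2+2v^{H_1+H_2}(1-f(H_1,H_2))$ and Taylor-expands $f$ to first order in $H_1-H_2$. This yields only $T_{s,t}\lesssim |H_1-H_2|\,|t-s|^{2H_2-\epsilon}$.

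\textbf{Your route.} You differentiate the kernel in $H$ and use Minkowski's inequality and scaling. This avoids the special-function identities, at the cost of checking uniform square-integrability of the log-weighted kernel, which you handle correctly. It gives $|H_1-H_2|^2$, which is the true order. Indeed, $f(H_1,H_2)$ is the correlation of $B^{H_1}_1$ and $B^{H_2}_1$, so it attains its maximum $1$ on the diagonal. Hence $1-f$ vanishes quadratically, and the first-order term in the paper's expansion must in fact be zero.

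\textbf{Consequence.} Your argument proves the lemma with exponent $1$ in place of $\tfrac12$, and hence a correspondingly better rate in Proposition \ref{prop:fitting_error}. The paper's closed-form route buys only explicitness. For the H\"older step, the paper cites \cite[Corollary 2.11]{nummi2024necessary} while you run GRR by hand; the two are equivalent.

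\textbf{One point to tighten.} With $p$ fixed, $\E\xi^p<\infty$ only controls moments of $\xi$ up to order $p$. To get all moments of this single random variable, apply Minkowski's inequality in $L^k(\Omega)$ to the double integral defining $\xi^p$. Combine this with the Gaussian identity $\E|Y_u-Y_v|^{pk}=c_{pk}(\E|Y_u-Y_v|^2)^{pk/2}$. This produces the same integrable singularity, so $\E\xi^{pk}\le C_{p,k}|H_1-H_2|^{pk/2}$ for every $k$.
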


\begin{proof}
We begin by computing $T_{s,t}(H_1,H_2) = \E[B_t^{H_1}-B_t^{H_2} - B_s^{H_1}-B_s^{H_2}]^2$. Using the stationarity of the increments, we get 
\begin{align*}
    T_{s,t}(H_1,H_2) &= 
    \E[(B_{t-s}^{H_1})^2] + \E[( B_{t-s}^{H_2})^2] - 2\E[(B_{t}^{H_1}-B_{s}^{H_1})(B_{t}^{H_2}-B_{s}^{H_2})]\\
    & = |t-s|^{2H_1} +|t-s|^{2H_2} \\
    &\quad - 2C_{H_1}C_{H_2}\E\left[\int_{-\infty}^{t-s}K_{H_1}(t-s,r)dW_r\int_{-\infty}^{t-s}K_{H_2}(t-s,r)dW_r\right].
\end{align*}

For the last term we have
\begin{align*}
&\E\left[\int_{-\infty}^{t-s}K_{H_1}(t-s,r)dW_r\int_{-\infty}^{t-s}K_{H_2}(t-s,r)dW_r\right] \nonumber\\
&\qquad = \int_{-\infty}^{t-s}\left\{(t-s-r)^{H_1-1/2} - (-r)_+^{H_1-1/2}\right\}\left\{(t-s-r)^{H_2-1/2} - (-r)_+^{H_2-1/2}\right\}dr\nonumber\\
&\qquad = (t-s)^{H_1+H_2}\int^{\infty}_{-1}\left\{(1+u)^{H_1-1/2} - u_+^{H_1-1/2}\right\}\left\{(1+u)^{H_2-1/2} - u_+^{H_2-1/2}\right\}du\nonumber\\
&\qquad = |t-s|^{H_1+H_2}\left\{\int^{\infty}_{0}\left\{(1+u)^{H_1-1/2} - u^{H_1-1/2}\right\}\right.\\
&\left.\hspace{0.5cm}\qquad \cdot \left\{(1+u)^{H_2-1/2} - u^{H_2-1/2}\right\}du+\frac{1}{H_1+H_2}\right\}\nonumber\\
&\qquad = -|t-s|^{H_1+H_2}\Gamma(-H_1-H_2)\left(\frac{\Gamma(1/2+H_1)}{\Gamma(1/2-H_2)} + \frac{\Gamma(1/2+H_2)}{\Gamma(1/2-H_1)}\right). 
\end{align*}
Therefore,
\begin{align*}
    T_{s,t}(H_1,H_2)
    & = |t-s|^{2(H_1\wedge H_2)}\left(1 +|t-s|^{2|H_2-H_1|} - 2|t-s|^{|H_2-H_1|}f(H_1,H_2)\right)\\
    &= |t-s|^{2H_1} + |t-s|^{2H_2} - 2|t-s|^{H_2+H_1}f(H_1,H_2)
\end{align*}
with
$$
f(H_1,H_2):=-{\Gamma(-H_1-H_2)}C_{H_1}C_{H_2}\left(\frac{\Gamma(1/2+H_1)}{\Gamma(1/2-H_2)} + \frac{\Gamma(1/2+H_2)}{\Gamma(1/2-H_1)}\right).
$$
Notice that, for $H_1,H_2\in(\underline{H},\overline{H})$, the map $(H_1,H_2)\mapsto f(H_1,H_2)$ is continuous, positive, bounded by $1$, and converges to $1$ when $H_2\rightarrow H_1$. 
In order to obtain explicit rate, we rely on Taylor's expansion of the terms. Denote $v=|t-s|$ and, without loss of generality, let $H_2<H_1$.
Then we have 
\begin{align*}
&T_{s,t}(H_1,H_2)\\
&= v^{2H_2} +v^{2H_1} - 2v^{H_2+H_1}f(H_1,H_2)\\
&= v^{2H_2}(1 +v^{2(H_1-H_2)} - 2v^{H_1-H_2}) +2v^{H_2+H_1}(1- f(H_1,H_2))\\
&= v^{2H_2}(1 -v^{H_1-H_2})^2  \\
&\quad +2v^{H_2+H_1}\left[1+C_{H_1}C_{H_2}\left(B(1/2+H_1, -H_1-H_2)+ B(1/2+H_2,-H_1-H_2)\right)\right].
\end{align*}
By using the inequality $xe^{1-x}\leq 1$ for $x\in[0,1]$, we observe that, for $v\in(0,1)$,
$$
1-v^{H_1-H_2}\leq (H_1-H_2)|\log(v)|.
$$ 
This leads to
\begin{align*}
   v^{2H_2}(1 -v^{H_1-H_2})^2 \leq v^{2H_2}(H_1-H_2)^2|\log(v)|^2 \lesssim v^{2H_2-\epsilon}(H_1-H_2)^2
\end{align*}
for any $\epsilon>0$. For the other term, by expanding \\
$H_1\mapsto C_{H_1}\left(B(1/2+H_1, -H_1-H_2)+ B(1/2+H_2,-H_1-H_2)\right)$ around $H_2$ gives
\begin{align*}
&C_{H_1}\left(B(1/2+H_1, -H_1-H_2)+ B(1/2+H_2,-H_1-H_2)\right) \\
&\quad =  -\frac1{C_{H_2}}+ (H_1-H_2)\left[-\frac1{C_{H_2}^2}\dfrac{d}{dx}C_{x}|_{x=H_2}+ C_{H_2}\dfrac{d}{dx}B(1/2+x, -x-H_2)|_{x=H_2}\right.\\
&\qquad \left.+ C_{H_2}\dfrac{d}{dx}B(1/2+H_2,-x-H_2)|_{x=H_2}\right] + O((H_1-H_2)^2),
\end{align*}
where the derivatives are bounded since $H_2\in (\underline{H},\overline{H})\subset(1/2,1)$. 
It follows by combining both expansions that, for any $\epsilon>0$, there exists a constant $C>0$, depending on $[0,T]$, $H_1$, $H_2$, and $\epsilon$, such that
\begin{align*}
T_{s,t}(H_1,H_2)&\leq C |t-s|^{2H_2-\epsilon} (H_1-H_2).
\end{align*}
In order to complete the proof, note that we have 
$$
\Vert B^{H_1}-B^{H_2}\Vert_{1,1-\alpha} \lesssim \Vert B^{H_1}-B^{H_2}\Vert_{1-\alpha+\epsilon},
$$
where $\Vert \cdot\Vert_{\gamma}$ denotes the $\gamma$-H\"older norm. By considering a process 
$$
X_t = \frac{B_t^{H_1}-B_t^{H_2}}{\sqrt{H_1-H_2}},
$$
it then follows from \cite[Corollary 2.11]{nummi2024necessary} that 
$$
|X_t-X_s| \leq C(\omega) |t-s|^{\gamma}
$$
for any $\gamma<H_2$, and where $C(\omega)$ has all moments finite. Consequently, we obtain
$$
\Vert B^{H_1}-B^{H_2}\Vert_{1-\alpha+\epsilon} \lesssim C(\omega)\sqrt{H_1-H_2} 
$$
provided that $\epsilon>0$ is small enough and $\gamma$ large enough so that $1-\alpha+\epsilon<\gamma$. This concludes the proof. 
\end{proof}
The bound for the fitting error
 $\mathcal{E}_{\texttt{fit},\alpha}(M) =\Vert X^{\HeM} - {X}^H\Vert_{\alpha,\infty}$
 now follows almost instantly from Lemma \ref{lem:brownian_convergence}.

\begin{proposition}\label{prop:fitting_error}
Consider $(X_t^{\HeM};t\in[0,T])$ and $(X_t^{{H}};t\in[0,T])$ the statistical model in \eqref{eq:statistical_mode} and the true model in \eqref{eq:true_model}, respectively. Under assumptions of Section \ref{sec:assumptions} there exist $1/2<\underline{H}<\overline{H}<1$ such that $\underline{H}<H<\overline{H}$. Let $\HeM$ the estimator of $H$. Then, for any $\alpha\in(0,1/2)$,
there exists a random variable $C$, independent of $M$, such that
\begin{equation*}
\mathcal{E}_{\texttt{fit},\alpha}(M) \leq C|\HeM-H|^{\frac12}.
\end{equation*}
\end{proposition}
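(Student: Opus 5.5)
The estimate will come from combining Lemma \ref{lem:brownian_convergence} with a pathwise stability estimate for the Nualart--R\u{a}\c{s}canu equation with respect to the driving noise. The authors already point to such a result in \cite[Theorem 2]{Viitasaari-2022}. The plan is to treat $X^H$ and $X^{\HeM}$ as solutions of the same deterministic Young-type equation, driven respectively by the paths $g_1=B^H(\omega)$ and $g_2=B^{\HeM}(\omega)$. Both paths are built from the same Brownian motion $W$ through the Mandelbrot--van Ness kernels. We then show that, pathwise, the solution map $g\mapsto X$ is Lipschitz from $\|\cdot\|_{1,1-\alpha}$ into $\|\cdot\|_{\alpha,\infty}$ on bounded sets.

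\emph{Step 1 (a priori bounds).} Fix $\alpha\in(1-\underline{H},1/2)$. By the a priori estimates of \cite{Nualart-Rascanu2002} under \ref{H1}--\ref{H4}, we have
\[
\|X^{g}\|_{\alpha,\infty}\le C\exp\bigl(C\Lambda_\alpha(g)^{\kappa}\bigr),
\]
where $\Lambda_\alpha(g)\lesssim\|g\|_{1,1-\alpha}$ and $\kappa$ depends on $\gamma$ and $\alpha$. Since $H$ and $\HeM$ both lie in $(\underline{H},\overline{H})$, Kolmogorov/Garsia--Rodemich--Rumsey bounds uniform in the Hurst index give
\[
\sup_{H'\in[\underline{H},\overline{H}]}\|B^{H'}\|_{1,1-\alpha}\le C(\omega),
\]
with $C(\omega)$ having all moments, for $1-\alpha<\underline{H}$. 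This yields a random compact set containing both solutions, and a random constant independent of $M$.

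\emph{Step 2 (stability).} Write the difference
\begin{align*}
X^{\HeM}_t-X^H_t&=\int_0^t\bigl(b(s,X^{\HeM}_s)-b(s,X^H_s)\bigr)ds+\int_0^t\bigl(\sigma(s,X^{\HeM}_s)-\sigma(s,X^H_s)\bigr)dB^{\HeM}_s\\
&\quad+\int_0^t\sigma(s,X^H_s)\,d\bigl(B^{\HeM}-B^H\bigr)_s .
\end{align*}
The last term is bounded via the fractional integration-by-parts estimate
\[
\Bigl|\int f\,dg\Bigr|_{\alpha}\lesssim \Lambda_\alpha(g)\int\dots,
\]
which gives a contribution of order $\|B^{\HeM}-B^H\|_{1,1-\alpha}\,(1+\|X^H\|_{\alpha,\infty})$. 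The first two terms are handled with the standard estimate for differences
\[
|\sigma(s,x_1)-\sigma(s,x_2)-\sigma(r,y_1)+\sigma(r,y_2)|,
\]
using the local Lipschitz continuity of $\nabla\sigma$ on the compact set from Step 1, exactly as in the uniqueness proof of \cite{Nualart-Rascanu2002}. Working in the weighted norm $\|\cdot\|_{\alpha,\lambda}$ with $\lambda=\lambda(\omega)$ large absorbs these terms into the left-hand side. The resulting bound is
\[
\|X^{\HeM}-X^H\|_{\alpha,\infty}\le e^{\lambda T}\|\cdot\|_{\alpha,\lambda}\le C(\omega)\,\|B^{\HeM}-B^H\|_{1,1-\alpha}.
\]
This is precisely the content of \cite[Theorem 2]{Viitasaari-2022}, which I would invoke directly.

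\emph{Step 3 (conclusion).} Apply Lemma \ref{lem:brownian_convergence} with $H_1=H$ and $H_2=\HeM$. Because $\HeM$ is random while the lemma is stated for deterministic parameters, I expect this to be the main obstacle: the lemma must hold uniformly in $H_2$. I would first try to rerun the proof of the lemma for the field $(H_1,H_2)\mapsto (B^{H_1}-B^{H_2})/\sqrt{|H_1-H_2|}$ indexed jointly by $(t,H_1,H_2)$. The variance bound there is uniform on $[\underline{H},\overline{H}]^2$, and Kolmogorov's criterion in the joint parameter then gives a single $C(\omega)$ with all moments. This $C(\omega)$ works simultaneously for all parameter pairs, and plugging $\HeM$ into it yields $\mathcal{E}_{\texttt{fit},\alpha}(M)\le C|\HeM-H|^{1/2}$ with $C$ independent of $M$.
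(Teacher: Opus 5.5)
Your proposal follows the paper's proof: the paper invokes \cite[Theorem 2]{Viitasaari-2022} (using $\HeM\in[\underline{H},\overline{H}]$) to get $\mathcal{E}_{\texttt{fit},\alpha}(M)\le C_\omega\|B^{\HeM}-B^{H}\|_{1,1-\alpha}$ and then applies Lemma~\ref{lem:brownian_convergence}, so your Steps 1--2 just unpack that cited stability result, and your restriction $\alpha\in(1-\underline{H},1/2)$ is what the argument actually needs. Your Step 3 point is legitimate and is passed over silently in the paper: since $\HeM$ is random, the lemma's constant must be uniform over $(H_1,H_2)\in[\underline{H},\overline{H}]^2$, and the joint Kolmogorov argument in $(t,H_1,H_2)$ you suggest is a sound way to close it.
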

\begin{proof}
Using that $\HeM\in[\underline{H},\overline{H}]$ and \cite[Theorem 2]{Viitasaari-2022}, we obtain that there exists a random constant $C_\omega$ such that
\begin{align*}
\mathcal{E}_{\texttt{fit},\alpha}(M)\leq C_\omega\|B^{\HeM} - B^{H}\|_{1,1-\alpha}.
\end{align*}
The claim now follows from Lemma \ref{lem:brownian_convergence}. 
\end{proof}

\subsection{\bf Coefficients approximation error}
\label{subsubsec:approximation}
To simplify the notation in this section, the processes $X^{\HeM}$ and $X^{n,\HeM}$ will be denoted by $X$ and $X^n$, respectively, and the fractional Brownian motions will be denoted simply by $B$. However, it should be kept in mind that the underlying Hurst parameter is $\HeM$. In order to simplify the notation, in the sequel we denote $f \lesssim g$ if $f\leq Cg$ for some (possibly random) unimportant constant $C$.
\begin{proposition}
\label{prop:coefficient-approximation}
Suppose that the Assumptions of Section \ref{sec:assumptions} are valid. Let $\tilde{\delta} < \HeM$, $s,s_2\in (0,1)$ be such that we can choose $\alpha \in (1-\HeM, \min(1/2,s\tilde{\delta},s_2))$. Set $\tilde\sigma = \sigma - \sigma_n$ and $\tilde b = b - b_n$. Then for any $\lambda \geq 0$ and any 
$q>\frac{d}{1-s}$, $q_2>\frac{1}{1-s_2}$, and $\rho > \frac{1}{1-\alpha}$ such that $q\ge\rho$, we have, for suitably large $\lambda$, that
 \begin{align*}
   \Vert X_\cdot - X_\cdot^n\Vert_{\alpha,\lambda} 
    &\lesssim \Vert\partial_t\nabla_z\tilde\sigma\Vert_{(q_2,q)} +\Vert\nabla_z\tilde\sigma\Vert_{(q_2,q)} + \Vert\partial_t\tilde\sigma\Vert_{(q_2,q)} + \Vert\tilde\sigma\Vert_{(q_2,q)}  \\
    &+\Vert \nabla_z \tilde b\Vert_{(\rho,q)}  + \Vert  \tilde b\Vert_{(\rho,q)} . 
\end{align*}
    Consequently, we have
     \begin{align*}
   \Vert X_\cdot - X_\cdot^n\Vert_{\alpha,\infty} 
    &\lesssim \Vert\partial_t\nabla_z\tilde\sigma\Vert_{(q_2,q)} +\Vert\nabla_z\tilde\sigma\Vert_{(q_2,q)} + \Vert\partial_t\tilde\sigma\Vert_{(q_2,q)} + \Vert\tilde\sigma\Vert_{(q_2,q)}  \\
    &+\Vert \nabla_z \tilde b\Vert_{(\rho,q)}  + \Vert  \tilde b\Vert_{(\rho,q)} . 
\end{align*}
\end{proposition}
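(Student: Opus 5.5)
We work pathwise: fix a realisation for which $B=B^{\HeM}$ is $(1-\alpha+\epsilon)$-Hölder, so that $\Vert B\Vert_{1,1-\alpha}<\infty$. By Lemma \ref{lemma:uniform-boundedness} and \ref{H5}, both $X$ and $X^n$ then live in a compact $\mathcal{K}$, and their $\Vert\cdot\Vert_{\alpha,\infty}$ and Hölder norms are bounded uniformly in $n$.

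Write
\[
X_t-X^n_t=\int_0^t\big(b(s,X_s)-b(s,X^n_s)\big)ds+\int_0^t\tilde b(s,X^n_s)ds+\int_0^t\big(\sigma(s,X_s)-\sigma(s,X^n_s)\big)dB_s+\int_0^t\tilde\sigma(s,X^n_s)dB_s .
\]
The first and third terms are the ``stability'' terms. We treat them exactly as in the Nualart--R\u{a}\c{s}canu uniqueness argument, using the fractional integration by parts bound
\[
\Big|\int_s^t f\,dB\Big|\lesssim \Vert B\Vert_{1,1-\alpha}\int_s^t\Big(\frac{|f(r)|}{(r-s)^\alpha}+\int_s^r\frac{|f(r)-f(u)|}{(r-u)^{\alpha+1}}du\Big)dr
\]
together with the local Lipschitz and $C^1$ properties of $\sigma$ on $\mathcal{K}$ (\ref{H1}, \ref{H2}). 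These give a contribution of the form $C\lambda^{-(1-2\alpha)}\Vert X-X^n\Vert_{\alpha,\lambda}$ in the weighted norm $\Vert\cdot\Vert_{\alpha,\lambda}$. For $\lambda$ large this is absorbed into the left-hand side. Hence
\[
\Vert X-X^n\Vert_{\alpha,\lambda}\lesssim\Big\Vert\int_0^\cdot\tilde b(s,X^n_s)ds\Big\Vert_{\alpha,\infty}+\Big\Vert\int_0^\cdot\tilde\sigma(s,X^n_s)dB_s\Big\Vert_{\alpha,\infty},
\]
and the $\Vert\cdot\Vert_{\alpha,\infty}$ statement follows because $e^{-\lambda T}\Vert\cdot\Vert_{\alpha,\infty}\le\Vert\cdot\Vert_{\alpha,\lambda}$.

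It remains to bound the two forcing terms by $L^\rho_tL^q_x$-type norms of $\tilde b$ and $\tilde\sigma$ rather than their sup norms. This is the main obstacle, since the path $X^n$ may concentrate on null sets. The plan is to use a Sobolev/Morrey-type representation on the compact $\mathcal{K}$. For $q>d/(1-s)$ the embedding $W^{1,q}(\mathcal{K})\hookrightarrow C^{1-d/q}$ and the potential representation
\[
|g(z)|\lesssim \Vert g\Vert_{L^q}+\int_{\mathcal{K}}\frac{|\nabla g(y)|}{|z-y|^{d-1}}dy
\]
give
\[
\sup_z|g(t,z)|\lesssim\Vert g(t,\cdot)\Vert_{L^q}+\Vert\nabla_z g(t,\cdot)\Vert_{L^q},
\]
and a Hölder-in-$z$ seminorm of order $s$ controlled by the same quantity. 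Applied to $g=\tilde b$ and integrated in time with Hölder's inequality ($\rho>1/(1-\alpha)$ so that $(t-r)^{-\alpha}\in L^{\rho'}$), this handles the drift term by $\Vert\tilde b\Vert_{(\rho,q)}+\Vert\nabla_z\tilde b\Vert_{(\rho,q)}$.

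For the stochastic term we insert $f(r)=\tilde\sigma(r,X^n_r)$ into the integration by parts bound and split
\[
f(r)-f(u)=\big[\tilde\sigma(r,X^n_r)-\tilde\sigma(u,X^n_r)\big]+\big[\tilde\sigma(u,X^n_r)-\tilde\sigma(u,X^n_u)\big].
\]
The time increment is controlled by a one-dimensional Sobolev embedding in $t$ (here $q_2>1/(1-s_2)$ and $\alpha<s_2$ make $W^{1,q_2}_t\hookrightarrow C^{s_2}_t$), which is where $\partial_t\tilde\sigma$ and $\partial_t\nabla_z\tilde\sigma$ enter after combining with the spatial embedding. The space increment is bounded by the spatial $s$-Hölder seminorm times $|X^n_r-X^n_u|^s\lesssim(r-u)^{s\tilde\delta}$, using the $\tilde\delta$-Hölder continuity of $X^n$ with $\tilde\delta<\HeM$. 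The resulting singularity $(r-u)^{s\tilde\delta-\alpha-1}$ is integrable precisely because $\alpha<s\tilde\delta$.

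Collecting terms yields the four $\tilde\sigma$ norms in $(q_2,q)$. The delicate points are checking integrability exponents consistently, using $q\ge\rho$ and interchanging the $\sup_z$ with the mixed norm, and I expect most effort to go there.
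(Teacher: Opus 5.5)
Your proof is correct, but it mirrors the paper's decomposition rather than reproducing it. The paper writes $X-X^n=F^{\tilde b}(X)+G^{\tilde\sigma}(X)+[F^{b_n}(X)-F^{b_n}(X^n)]+[G^{\sigma_n}(X)-G^{\sigma_n}(X^n)]$. Its forcing terms therefore run along the fixed path $X$, and the stability estimates of \cite[Props.~4.2, 4.4]{Nualart-Rascanu2002} are applied to $b_n,\sigma_n$, with constants made uniform by \ref{H5}. You instead freeze the true $b,\sigma$ in the stability terms and put the forcing along $X^n$. Uniformity in $n$ then enters through the $\tilde\delta$-H\"older bound on $X^n$ inside the forcing estimate. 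Both routes need Lemma~\ref{lemma:uniform-boundedness} anyway, since $\Delta(X^n)$ appears in the constant of Prop.~4.2.

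For the forcing terms, the paper combines the Riesz-potential bound of Lemma~\ref{lemma:difference}, which is stated for $W^{1,q}_0$ functions, with a pinning point from Lemma~\ref{lemma:simpler}. For the drift it pins at a single spatial point $z$ for all times, and that is why it needs $q\ge\rho$ (Minkowski). Your Morrey inequality on the cube, $\sup_z|g|\lesssim\Vert g\Vert_{L^q}+\Vert\nabla g\Vert_{L^q}$, applied at each fixed $t$ before integrating in time, does the same job. It needs no compact-support assumption and makes $q\ge\rho$ superfluous, so you can drop it from your list of delicate points.

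When you write the stochastic term out, make explicit that the $|f(r)|$ term and the spatial-increment term need $\sup_{u,z}|\tilde\sigma(u,z)|$ and $\sup_u\Vert\nabla_z\tilde\sigma(u,\cdot)\Vert_{L^q}$. These follow from the time embedding applied to the $W^{1,q}_z$- and $L^q_z$-valued maps $u\mapsto\tilde\sigma(u,\cdot)$ and $u\mapsto\nabla_z\tilde\sigma(u,\cdot)$; for example, $\Vert\nabla_z\tilde\sigma(u,\cdot)-\nabla_z\tilde\sigma(u_0,\cdot)\Vert_{L^q}\le\int_0^T\Vert\partial_t\nabla_z\tilde\sigma(v,\cdot)\Vert_{L^q}dv$. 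This is a second place, besides the time increment, where $\partial_t\nabla_z\tilde\sigma$ enters. Using a supremum in $u$ rather than H\"older in $u$ is what keeps your condition at $\alpha<s\tilde\delta$ instead of the stronger $\alpha<s\tilde\delta-1/q_2$.
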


We use short notation 
$$
G_t^{\sigma}(f) = \int_0^t \sigma(s,f(s))dB_s
$$
and 
$$
F_t^{b}(f)  = \int_0^t b(s,f(s))ds.
$$
Then we can write, with $\sigma_n$ and $b_n$ approximating $\sigma$ and $b$,
\begin{equation}
\label{eq:solution}
X_t = x_0 + \int_0^t b(s,X_s)ds +\int_0^t \sigma(s,X_s)dB_s =x_0 + F_t^b(X)+G_t^{\sigma}(X)
\end{equation}
and
\begin{equation}
\label{eq:approx-solution}
X_t^n = x_0 + \int_0^t b_n(s,X^n_s)ds +\int_0^t \sigma_n(s,X^n_s)dB_s =x_0 + F_t^{b_n}(X^n)+G_t^{\sigma_n}(X^n).
\end{equation}
We begin with some auxiliary estimates.

\begin{lemma}
\label{lemma:difference}
    Let $s\in (0,1)$ be arbitrary and $q>\frac{d}{1-s}$. Let $f:\mathbb{R}^d \mapsto \mathbb{R}$ satisfy $f \in W^{1,q}_0(\mathbb{R}^d)$. Then for all Lebesgue points $y,z$ of $f$ we have 
    $$
    |f(y)-f(z)| \lesssim |y-z|^s \Vert \nabla f\Vert_q.
    $$
\end{lemma}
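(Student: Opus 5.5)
This is Morrey's inequality, and I would prove it with the classical averaging argument. The hypothesis $q>\frac{d}{1-s}$ gives $q>d$ and $1-\frac{d}{q}>s$. Morrey's inequality yields a Hölder exponent $\beta:=1-\frac dq$ for $f\in W^{1,q}_0(\mathbb{R}^d)$. Since $f$ only has a representative, I would work with Lebesgue points: for such a point $y$ we have $f(y)=\lim_{r\to0}\fint_{B(y,r)}f$.

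\textbf{Step 1: local estimate for smooth functions.} First let $f\in C_c^\infty(\mathbb{R}^d)$. Take a ball $B=B(x,r)$ and a point $y\in B$. Write $f(w)-f(y)=\int_0^1\nabla f(y+t(w-y))\cdot(w-y)\,dt$ and average over $w\in B$. After the change of variables, this gives the standard potential bound
\[
\Big|\fint_B f - f(y)\Big|\lesssim \int_{B}\frac{|\nabla f(w)|}{|w-y|^{d-1}}\,dw .
\]
Hölder's inequality bounds the right-hand side by $\Vert\nabla f\Vert_q$ times $\big(\int_{B(y,2r)}|w-y|^{-(d-1)q'}dw\big)^{1/q'}$. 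This integral is finite exactly because $q>d$, and it is of order $r^{1-d/q}$.

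\textbf{Step 2: two points.} Given $y,z$, set $r=|y-z|$ and $B=B(y,r)\cup$-containing ball $B(\frac{y+z}{2},r)$. Applying Step 1 to both points gives $|f(y)-f(z)|\lesssim r^{1-d/q}\Vert\nabla f\Vert_q$.

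\textbf{Step 3: density.} For general $f\in W^{1,q}_0$, take mollifications $f_\varepsilon\to f$ in $W^{1,q}$; they converge at every Lebesgue point of $f$. The bound therefore passes to the limit at all Lebesgue points $y,z$, with $\Vert\nabla f_\varepsilon\Vert_q\le\Vert\nabla f\Vert_q$.

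\textbf{Step 4: lowering the exponent.} This step is the main subtlety, since the statement uses $|y-z|^s$ with $s<\beta$ rather than $|y-z|^\beta$. For $|y-z|\le D$ we have $|y-z|^\beta\le D^{\beta-s}|y-z|^s$. In the application, all points lie in the large compact set $\mathcal K$, so $D=\operatorname{diam}\mathcal K$ works and the constant is absorbed into $\lesssim$.

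If unrestricted $y,z$ are really intended, I would handle large distances separately. For $|y-z|>1$ the ball-averaging gives only a bound of order $|y-z|^\beta$, which is not bounded by $|y-z|^s$. So I would instead invoke the compact support or restriction to $\mathcal K$, exactly as the paper does throughout. For this reason I expect this point to be the only place needing care, and I would state that the implicit constant depends on $\mathcal K$.
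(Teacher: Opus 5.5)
Your proof is correct, but it follows a different route from the paper. The paper never passes through Morrey's inequality. It quotes the pointwise fractional maximal function estimate \cite[Lemma C.2]{HTV},
\[
|f(y)-f(z)|\lesssim |y-z|^s\bigl(I_{1-s}|\nabla f|(y)+I_{1-s}|\nabla f|(z)\bigr),
\]
where $I_{1-s}$ is the Riesz potential of order $1-s$. It then bounds $\sup_x I_{1-s}|\nabla f|(x)$ by a single global H\"older inequality over $\operatorname{supp}(f)$. Local integrability of $|x-y|^{(1-s-d)p}$ is exactly the condition $q>\frac{d}{1-s}$, and the constant comes out as $\lesssim|\operatorname{supp}(f)|+1$. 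That argument reaches the exponent $s$ directly, without your density step or your exponent-lowering step, but it rests on an external pointwise inequality.

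You instead use the kernel $|w-y|^{1-d}$ localized to a ball of radius comparable to $|y-z|$. This is self-contained and gives the sharp exponent $1-\frac dq>s$. The price is Step 4: your constant depends on a bound for $|y-z|$ (the diameter of $\mathcal K$ or of the support) rather than on $|\operatorname{supp}(f)|$.

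Some such dependence is unavoidable in either proof. Replacing $f$ by $f(\cdot/\lambda)$ multiplies the right-hand side by $\lambda^{s-1+d/q}$, and this tends to $0$ as $\lambda\to\infty$, while the left-hand side is unchanged. So no constant independent of $f$ can work, and you were right to flag this. In the paper's applications (points in $\mathcal K$, times in $[0,T]$) both versions suffice.

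If you want the statement for all Lebesgue points, your large-distance case is easy to close under compact support. Let $D=\operatorname{diam}\operatorname{supp}(f)$ and suppose $|y-z|>D$. Then one point, say $z$, lies outside the support, so $f(z)=0$. If $y$ lies in the support, compare it with any $w$ at distance $2D$ from $y$, where $f(w)=0$. Your Morrey bound then gives
\[
|f(y)-f(z)|=|f(y)-f(w)|\lesssim D^{1-d/q}\Vert\nabla f\Vert_q\le D^{1-d/q-s}|y-z|^s\Vert\nabla f\Vert_q .
\]
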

\begin{proof}
From fractional maximal function inequality, see e.g. \cite[Lemma C.2]{HTV}, we have, for any $s\in(0,1)$,
    \begin{align*}
    |f(y)-f(z)| 
    &\lesssim |y-z|^s \left(I_{1-s}|\nabla f|(y) + I_{1-s}|\nabla f|(z)\right)\\
    &\leq 2|y-z|^s \sup_{x\in \text{supp}(f)}I_{1-s}|\nabla f|(x),
    \end{align*}
     where 
    $$
    I_{1-s}|\nabla f|(x) = \int_{\mathbb{R}^d} |x-y|^{-d+1-s}|\nabla f(y)|dy
    $$
    denotes the Riesz potential. Choose $1<p<\frac{d}{d-1+s}$ and let $q$ be the H\"older conjugate of $p$. Then H\"older inequality gives
    \begin{align*}
    &\int_{\mathbb{R}^d} |x-y|^{-d+1-s}|\nabla f(y)|dy \\
    & = \int_{\text{supp}(f)} |x-y|^{-d+1-s}|\nabla f(y)|dy \\
    &\leq \left(\int_{\text{supp}(f)}|x-y|^{(-d+1-s)p}dy\right)^{\frac{1}{p}}\Vert \nabla f\Vert_q.
    \end{align*}
Here 
$$
\sup_{x\in \mathbb{R}^d}\int_{\text{supp}(f)}|x-y|^{(-d+1-s)p}dy \lesssim |\text{supp}(f)|+1
$$
for any   $1<p<\frac{d}{d-1+s}$ which translates into $q>\frac{d}{1-s}$. This completes the proof.
\end{proof}
\begin{lemma}
\label{lemma:simpler}
Let $f\in L^{q}(\mathcal{K})$ for an arbitrary $\mathcal{K}\subset \mathbb{R}^d$. Then for any $q\geq 1$ there exists $x_0$ such that 
$$
|f(x_0)| \lesssim \Vert f\Vert_q.
$$
\end{lemma}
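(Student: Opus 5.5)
The argument is a Chebyshev-type averaging argument: a function cannot exceed its $L^q$-average everywhere. Here $\mathcal{K}$ is implicitly a set of finite, positive Lebesgue measure (in the application it is the large compact set containing all paths), and $x_0$ is understood as a Lebesgue point of $f$, as in Lemma \ref{lemma:difference}. The implicit constant may depend on $|\mathcal{K}|$.

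\textbf{Step 1: contradiction hypothesis.} Suppose that for every $x\in\mathcal{K}$ outside a null set we have
$$|f(x)| > |\mathcal{K}|^{-1/q}\Vert f\Vert_q .$$
If $\Vert f\Vert_q=0$, then $f=0$ almost everywhere and any Lebesgue point works, so assume $\Vert f\Vert_q>0$.

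\textbf{Step 2: integrate.} Raise the inequality to the power $q$ and integrate over $\mathcal{K}$. The pointwise inequality is strict on a set of full, hence positive, measure, so
$$\Vert f\Vert_q^q=\int_{\mathcal{K}}|f|^q\,dx > |\mathcal{K}|\cdot|\mathcal{K}|^{-1}\Vert f\Vert_q^q=\Vert f\Vert_q^q ,$$
which is a contradiction.

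\textbf{Step 3: conclusion.} The set
$$\bigl\{x\in\mathcal{K}:\ |f(x)|\le |\mathcal{K}|^{-1/q}\Vert f\Vert_q\bigr\}$$
therefore has positive measure. Since almost every point is a Lebesgue point, this set contains a Lebesgue point $x_0$ of $f$, and
$$|f(x_0)|\le |\mathcal{K}|^{-1/q}\Vert f\Vert_q\lesssim \Vert f\Vert_q .$$

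\textbf{Main obstacle.} The analysis itself is routine. The only points needing care are making explicit that $0<|\mathcal{K}|<\infty$, so the constant $|\mathcal{K}|^{-1/q}$ is finite, and choosing $x_0$ among Lebesgue points so that it can be combined later with Lemma \ref{lemma:difference}.

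Equivalently, one can avoid the contradiction form: Chebyshev's inequality gives
$$\bigl|\{|f|>2^{1/q}|\mathcal{K}|^{-1/q}\Vert f\Vert_q\}\bigr|\le |\mathcal{K}|/2,$$
so the complementary set has measure at least $|\mathcal{K}|/2>0$ and contains a Lebesgue point.
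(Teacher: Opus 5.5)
Your proof is correct and is essentially the paper's argument: assume $|f|$ exceeds a fixed multiple of $\Vert f\Vert_q$ almost everywhere on $\mathcal{K}$, integrate, and reach a contradiction. Your version is more careful than the paper's. The paper's factor $2$ tacitly requires $|\mathcal{K}|$ not to be too small (e.g.\ $|\mathcal{K}|\ge 2^{-q}$), whereas your constant $|\mathcal{K}|^{-1/q}$, together with the choice of $x_0$ as a Lebesgue point, makes explicit both the dependence on $\mathcal{K}$ and what $f(x_0)$ means.
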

\begin{proof}
    If for almost all $x_0$ we would have $f(x_0)> 2 \Vert f\Vert_q$, it would follow that $\Vert f\Vert_q >2\Vert f\Vert_q$ giving the contradiction.
\end{proof}
The following follows in a similar manner.
\begin{lemma}
\label{lemma:simpler-v2}
For any $q_2,q\geq 1$ there exists $s_0\in [0,T]$ and $z_0\in\mathcal{K}$ such that
$$
|\sigma(s_0,z_0)| \lesssim \Vert\sigma\Vert_{(q_2,q)}.
$$
\end{lemma}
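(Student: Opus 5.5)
We argue by contradiction with a Chebyshev/Markov-type averaging argument, exactly as in Lemma \ref{lemma:simpler}, now on the product domain $[0,T]\times\mathcal{K}$ with the mixed norm $\Vert\cdot\Vert_{(q_2,q)}$. Throughout, $\mathcal{K}$ is compact, so $|\mathcal{K}|<\infty$ and all the constants below are finite. We also use that $\sigma$ is continuous: by \ref{H1}, $\sigma$ is Lipschitz in $x$ and in $t$. Hence it suffices to find a point where $|\sigma|$ is small, rather than a point that is merely an a.e.\ representative.

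\textbf{Step 1: time slices.} Set $g(t)=\Vert\sigma(t,\cdot)\Vert_{L^q(\mathcal{K})}$, so that $\Vert\sigma\Vert_{(q_2,q)}=\Vert g\Vert_{L^{q_2}(0,T)}$. Let $c=2T^{-1/q_2}$. Suppose $g(t)>c\Vert g\Vert_{q_2}$ for almost every $t\in[0,T]$. Integrating gives $\Vert g\Vert_{q_2}^{q_2}>c^{q_2}T\Vert g\Vert_{q_2}^{q_2}=2^{q_2}\Vert g\Vert_{q_2}^{q_2}$, which is impossible. If $\Vert g\Vert_{q_2}=0$, then $g=0$ a.e.\ and there is nothing to prove. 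So there is a set of positive measure of times $s_0$ with $g(s_0)\le 2T^{-1/q_2}\Vert\sigma\Vert_{(q_2,q)}$. We fix one such $s_0$.

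\textbf{Step 2: the space variable.} We apply Lemma \ref{lemma:simpler} to $x\mapsto\sigma(s_0,x)\in L^q(\mathcal{K})$. The same contradiction argument, now with the factor $|\mathcal{K}|^{-1/q}$ made explicit, produces $z_0\in\mathcal{K}$ with
\[
|\sigma(s_0,z_0)|\le 2|\mathcal{K}|^{-1/q}g(s_0)\le 4|\mathcal{K}|^{-1/q}T^{-1/q_2}\Vert\sigma\Vert_{(q_2,q)}.
\]
In fact the set of such $z_0$ has positive measure, and by continuity of $\sigma$ the bound holds pointwise at any chosen point of that set. This gives the claim with an implicit constant depending only on $T$, $|\mathcal{K}|$, $q$ and $q_2$.

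\textbf{Main obstacle.} The only delicate point is measure-theoretic: $\sigma(s_0,\cdot)$ must be measurable and belong to $L^q(\mathcal{K})$ for the chosen $s_0$. Fubini--Tonelli guarantees this for a.e.\ $s_0$, and continuity guarantees it for every $s_0$. Step 1 produces a positive-measure set of admissible $s_0$, so we can choose $s_0$ inside it. The same argument applies verbatim to $\tilde\sigma$, $\partial_t\tilde\sigma$, and so on, whenever these are continuous or we work with Lebesgue points.
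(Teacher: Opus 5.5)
Your proof is correct and follows the same contradiction argument the paper uses. The paper only says the lemma follows ``in a similar manner'' to Lemma~\ref{lemma:simpler}, and you carry that out in two stages: first on the time slices $t\mapsto\Vert\sigma(t,\cdot)\Vert_{L^q(\mathcal{K})}$, then in space. Your explicit normalising factors $T^{-1/q_2}$ and $|\mathcal{K}|^{-1/q}$ are in fact needed: the paper's one-line argument (``$f>2\Vert f\Vert_q$ a.e.\ would give $\Vert f\Vert_q>2\Vert f\Vert_q$'') fails as literally written when $|\mathcal{K}|$ is small, and only the implicit constant in $\lesssim$ rescues it.
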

\begin{lemma}
\label{lemma:difference-sigma}
    Let $s,s_2\in (0,1)$ be arbitrary. Let $q>\frac{d}{1-s}$ and $q_2 > \frac{1}{1-s_2}$. Suppose $\sigma$ is supported on a compact set $[0,T] \times \mathcal{K}$ and that partial weak derivatives exists.     
    Then we have 
   \begin{align*}
       |\sigma(r,x)-\sigma(u,z)| &\lesssim |x-z|^{s} \left[\Vert\partial_t\nabla_z\sigma\Vert_{(q_2,q)} + \Vert\nabla_z\sigma\Vert_{(q_2,q)}\right] \\
       &+|r-u|^{s_2}\left[ \Vert\partial_t\nabla_z\sigma\Vert_{(q_2,q)}+ \Vert\partial_t\sigma\Vert_{(q_2,q)}\right].
   \end{align*}

\end{lemma}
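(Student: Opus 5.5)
We split the increment with the triangle inequality through the intermediate point $(r,z)$:
$$
|\sigma(r,x)-\sigma(u,z)| \le |\sigma(r,x)-\sigma(r,z)| + |\sigma(r,z)-\sigma(u,z)|,
$$
and treat the spatial and temporal increments separately, each by Lemma \ref{lemma:difference} applied to a suitable one-parameter slice.

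\textbf{Spatial increment.} Fix $r$. Apply Lemma \ref{lemma:difference} (with exponent $s$, $q>\frac{d}{1-s}$) to $f=\sigma(r,\cdot)$, which is compactly supported in $\mathcal{K}$. This gives
$$
|\sigma(r,x)-\sigma(r,z)|\lesssim |x-z|^s\Vert\nabla_z\sigma(r,\cdot)\Vert_q.
$$
The right-hand side depends on the fixed time $r$, so the remaining task is to bound $\sup_r \Vert \nabla_z\sigma(r,\cdot)\Vert_q$ by mixed norms. Set $g(r)=\Vert\nabla_z\sigma(r,\cdot)\Vert_q$. By Minkowski's inequality, $|g(r)-g(u)|\le \Vert \nabla_z\sigma(r,\cdot)-\nabla_z\sigma(u,\cdot)\Vert_q \le \int_u^r \Vert\partial_t\nabla_z\sigma(t,\cdot)\Vert_q\,dt$. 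By Lemma \ref{lemma:simpler} (in the form of Lemma \ref{lemma:simpler-v2}) there is some $u_0$ with $g(u_0)\lesssim \Vert g\Vert_{L^{q_2}(0,T)}=\Vert\nabla_z\sigma\Vert_{(q_2,q)}$. Hence, by Hölder in time on $[0,T]$,
$$
\sup_r g(r)\le g(u_0)+\int_0^T\Vert\partial_t\nabla_z\sigma(t,\cdot)\Vert_q\,dt \lesssim \Vert\nabla_z\sigma\Vert_{(q_2,q)}+\Vert\partial_t\nabla_z\sigma\Vert_{(q_2,q)}.
$$
This is exactly the first bracket.

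\textbf{Temporal increment.} Fix $z$ and apply Lemma \ref{lemma:difference} in dimension one to $h=\sigma(\cdot,z)$ with exponent $s_2$ and $q_2>\frac{1}{1-s_2}$. We may extend by zero or use a cutoff on $[0,T]$; since the Riesz potential argument only used compact support, this works. It gives $|\sigma(r,z)-\sigma(u,z)|\lesssim |r-u|^{s_2}\Vert\partial_t\sigma(\cdot,z)\Vert_{L^{q_2}(0,T)}$. We must now make this uniform in $z$. Define $k(z)=\Vert\partial_t\sigma(\cdot,z)\Vert_{L^{q_2}}$. Interchanging order, $|k(z)-k(z')|\le \Vert\partial_t\sigma(\cdot,z)-\partial_t\sigma(\cdot,z')\Vert_{L^{q_2}}$, and we control the spatial oscillation of $\partial_t\sigma$ exactly as in the spatial step. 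Concretely, Lemma \ref{lemma:difference} applied to $\partial_t\sigma(t,\cdot)$ gives $|\partial_t\sigma(t,z)-\partial_t\sigma(t,z')|\lesssim \Vert\partial_t\nabla_z\sigma(t,\cdot)\Vert_q$, using that $\mathcal{K}$ is bounded. Taking the $L^{q_2}$ norm in $t$ gives $|k(z)-k(z')|\lesssim\Vert\partial_t\nabla_z\sigma\Vert_{(q_2,q)}$. Choosing $z_0$ via Lemma \ref{lemma:simpler} with $k(z_0)\lesssim \Vert k\Vert_{L^q(\mathcal{K})}$, and then using $q\ge q_2$ and Minkowski's integral inequality on the bounded domain, we get $\Vert k\Vert_{L^q(\mathcal K)}\lesssim \Vert\partial_t\sigma\Vert_{(q_2,q)}$. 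Therefore
$$
\sup_z k(z)\lesssim \Vert\partial_t\sigma\Vert_{(q_2,q)}+\Vert\partial_t\nabla_z\sigma\Vert_{(q_2,q)},
$$
which is the second bracket.

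\textbf{Main obstacle.} The delicate step is the uniformization over the frozen variable in the temporal increment. It requires exchanging $L^{q_2}$ in time with $L^q$ in space, and the direction of Minkowski's inequality depends on the relation between $q$ and $q_2$. If that exchange fails, the fallback is to run the time step entirely pointwise: apply Lemma \ref{lemma:difference} to $\partial_t\sigma(t,\cdot)$ to bound $\sup_z|\partial_t\sigma(t,z)|\lesssim |\partial_t\sigma(t,z_t)|+\Vert\partial_t\nabla_z\sigma(t,\cdot)\Vert_q$ with $z_t$ chosen by Lemma \ref{lemma:simpler}, take $L^{q_2}$ in $t$, and then use the trivial bound $|\sigma(r,z)-\sigma(u,z)|\le |r-u|^{1-1/q_2}\Vert\partial_t\sigma(\cdot,z)\Vert_{q_2}$, where $1-1/q_2>s_2$. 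The interpretation at Lebesgue points also needs care, and is handled via a.e. representatives, as in Lemma \ref{lemma:difference}.
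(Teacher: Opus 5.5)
Your argument is correct and is essentially the paper's proof. You use the same split through $(r,z)$, apply Lemma~\ref{lemma:difference} to each one-variable slice, and uniformize over the frozen variable by pinning with Lemma~\ref{lemma:simpler}, with the oscillation controlled by $\partial_t\nabla_z\sigma$.

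Your variations are small improvements rather than a different route. The bound $\Vert\nabla_z\sigma(r,\cdot)-\nabla_z\sigma(u,\cdot)\Vert_q\le\int_0^T\Vert\partial_t\nabla_z\sigma(t,\cdot)\Vert_q\,dt$, followed by H\"older in time, avoids needing $q\ge q_2$. So does your pointwise-in-$t$ fallback, which uses the elementary estimate $|r-u|^{1-1/q_2}\Vert\partial_t\sigma(\cdot,z)\Vert_{q_2}$ and also sidesteps any extension issue at $t=0,T$. The lemma does not assume $q\ge q_2$, yet your main temporal step and the paper's Minkowski step in the spatial part both tacitly rely on it, so you should make the fallback your main argument.
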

\begin{proof}
By triangle inequality we get
$$
|\sigma(r,x)-\sigma(u,z)| \leq |\sigma(r,x)-\sigma(r,z)| + |\sigma(r,z)-\sigma(u,z)|.
$$
where, by Lemma \ref{lemma:difference},
$$
|\sigma(r,x)-\sigma(r,z)| \lesssim |x-z|^{s}\Vert \nabla_z\sigma(r,\cdot)\Vert_q
$$
with $s\in (0,1)$ and $q>\frac{d}{1-s}$.
Since, by Lemma \ref{lemma:difference} again,
\begin{align*}
|\nabla_z \sigma(r,z) - \nabla_z\sigma(v,z)| &\lesssim |r-v|^{s_2}\Vert \partial_t\nabla_z \sigma(\cdot,z)\Vert_{q_2}    
\end{align*}
for any $s_2\in(0,1)$ and $q_2 > 1/(1-s_2)$, we get 
\begin{align*}
    \Vert \nabla_z\sigma(r,\cdot)\Vert_q & \leq \Vert \nabla_z\sigma(r,\cdot)-\nabla_z\sigma(r_0,\cdot) \Vert_q + \Vert \nabla_z\sigma(r_0,\cdot)\Vert_q\\
    &\lesssim \Vert\partial_t\nabla_z\sigma\Vert_{(q_2,q)} + \Vert \nabla_z\sigma(r_0,\cdot)\Vert_q
\end{align*}
where we have also used Minkowski's integral equality. By Lemma \ref{lemma:simpler} we can choose $r_0$ such that 
$$
\Vert \nabla_z\sigma(r_0,\cdot)\Vert_q \leq  \Vert\nabla_z\sigma\Vert_{(q_2,q)}
$$
which then gives us 
$$
|\sigma(r,x)-\sigma(r,z)| \lesssim |x-z|^{s}\left[ \Vert\partial_t\nabla_z\sigma\Vert_{(q_2,q)}+ \Vert\nabla_z\sigma\Vert_{(q_2,q)}\right].
$$
Similarly for the other term, \cref{lemma:difference} gives
$$
|\sigma(r,z)-\sigma(u,z)| \lesssim |r-u|^{s_2}\Vert \partial_t\sigma(\cdot,z)\Vert_{q_2}
$$
where now 
\begin{align*}
    \Vert \partial_t\sigma(\cdot,z)\Vert_{q_2} & \leq \Vert \partial_t\sigma(\cdot,z)- \partial_t\sigma(\cdot,z_0)\Vert_{q_2} + \Vert \partial_t\sigma(\cdot,z_0)\Vert_{q_2}\\
    &\lesssim  \Vert\partial_t\nabla_z\sigma\Vert_{(q_2,q)} +\Vert\partial_t\sigma\Vert_{(q_2,q)}
\end{align*}
with suitable choice of $z_0$.
This gives 
$$
|\sigma(r,z)-\sigma(u,z)| \lesssim |r-u|^{s_2}\left[ \Vert\partial_t\nabla_z\sigma\Vert_{(q_2,q)}+\Vert\partial_t\sigma\Vert_{(q_2,q)}\right]
$$
from which the result follows.
\end{proof}

Notice that from Theorem 2.1 in \cite{Nualart-Rascanu2002}, under assumptions \ref{H0}-\ref{H2}, the solutions to Equations \eqref{eq:solution}-\eqref{eq:approx-solution} exist and are unique. Next two propositions bound terms $G^\sigma(X)$ and $F^b(X)$ in the norm $\Vert \cdot \Vert_{\alpha,\lambda}$ in terms of $\Vert \cdot\Vert_{(p,q)}$ norms of $\sigma$ and $b$ and their partial derivatives.
\begin{proposition}
\label{prop:G-bound}
Let $\tilde{\delta} < \HeM$, $s,s_2\in (0,1)$ be such that we can choose $\alpha \in (1-\HeM, \min(1/2,s\tilde{\delta},s_2))$. Then for any $\lambda \geq 0$ and any 
$q>\frac{d}{1-s}$, $q_2>\frac{1}{1-s_2}$
we have
$$
\Vert G^\sigma(X)\Vert_{\alpha,\lambda} \lesssim  \Vert\partial_t\nabla_z\sigma\Vert_{(q_2,q)}+\Vert\nabla_z\sigma\Vert_{(q_2,q)}+\Vert\partial_t\sigma\Vert_{(q_2,q)} +  \Vert\sigma\Vert_{(q_2,q)}.
$$
\end{proposition}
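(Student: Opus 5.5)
We bound $G^\sigma(X)$ pathwise using the fractional-calculus (Nualart--R\u{a}\c{s}canu) estimate for Young integrals. For $\alpha\in(1-\HeM,1/2)$, write the integral through fractional derivatives:
\[
\int_s^t \sigma(r,X_r)\,dB_r=(-1)^\alpha\int_s^t D^\alpha_{s+}\sigma(\cdot,X_\cdot)(r)\,D^{1-\alpha}_{t-}B_{t-}(r)\,dr.
\]
By \cite[Prop.~4.1]{Nualart-Rascanu2002}, $|D^{1-\alpha}_{t-}B_{t-}(r)|\le \Lambda_\alpha(B)$ with $\Lambda_\alpha(B)\lesssim \Vert B\Vert_{1,1-\alpha}<\infty$ almost surely, since $1-\alpha<\HeM$. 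Consequently
\begin{align*}
|G_t^\sigma(X)| &\lesssim \Lambda_\alpha(B)\int_0^t\Big(\frac{|\sigma(r,X_r)|}{r^\alpha}+\int_0^r\frac{|\sigma(r,X_r)-\sigma(u,X_u)|}{(r-u)^{\alpha+1}}\,du\Big)(t-r)^{-\alpha}\,dr,\\
\int_0^t\frac{|G^\sigma_t-G^\sigma_s|}{(t-s)^{\alpha+1}}\,ds &\lesssim \Lambda_\alpha(B)\int_0^t\Big(\frac{|\sigma(r,X_r)|}{r^\alpha}+\int_0^r\frac{|\sigma(r,X_r)-\sigma(u,X_u)|}{(r-u)^{\alpha+1}}\,du\Big)(t-r)^{-2\alpha}\,dr.
\end{align*}
The second line is the standard estimate in the proof of Proposition 4.1/Theorem 2.1 of \cite{Nualart-Rascanu2002}. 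Because $\alpha<1/2$, both kernels $(t-r)^{-\alpha}$ and $(t-r)^{-2\alpha}$ are integrable. The whole task therefore reduces to bounding $\sup_r|\sigma(r,X_r)|$ and the increment $|\sigma(r,X_r)-\sigma(u,X_u)|$ by the $(q_2,q)$-norms of $\sigma$ and its derivatives, times powers of $|r-u|$ large enough to make $(r-u)^{-\alpha-1}$ integrable.

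We treat the increment with Lemma \ref{lemma:difference-sigma}:
\[
|\sigma(r,X_r)-\sigma(u,X_u)|\lesssim |X_r-X_u|^{s}\,A+|r-u|^{s_2}\,A',
\]
where $A=\Vert\partial_t\nabla_z\sigma\Vert_{(q_2,q)}+\Vert\nabla_z\sigma\Vert_{(q_2,q)}$ and $A'=\Vert\partial_t\nabla_z\sigma\Vert_{(q_2,q)}+\Vert\partial_t\sigma\Vert_{(q_2,q)}$. Under \ref{H0}--\ref{H5} the solution $X$ is $\tilde\delta$-H\"older for every $\tilde\delta<\HeM$, with a random constant independent of $n$ (Lemma \ref{lemma:uniform-boundedness}, together with the H\"older regularity of solutions). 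Hence $|X_r-X_u|^s\lesssim |r-u|^{s\tilde\delta}$. Since $\alpha<s\tilde\delta$ and $\alpha<s_2$, the integrals $\int_0^r (r-u)^{s\tilde\delta-\alpha-1}\,du$ and $\int_0^r (r-u)^{s_2-\alpha-1}\,du$ are finite and bounded uniformly in $r\le T$. This gives the contribution $A+A'$.

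For the size term we use Lemma \ref{lemma:simpler-v2}, which provides $(s_0,z_0)$ with $|\sigma(s_0,z_0)|\lesssim\Vert\sigma\Vert_{(q_2,q)}$. We then write $|\sigma(r,X_r)|\le|\sigma(s_0,z_0)|+|\sigma(r,X_r)-\sigma(s_0,z_0)|$ and apply Lemma \ref{lemma:difference-sigma} once more. Since all points lie in the compact set $[0,T]\times\mathcal K$, the powers $|X_r-z_0|^s$ and $|r-s_0|^{s_2}$ are bounded, so $\sup_r|\sigma(r,X_r)|\lesssim \Vert\sigma\Vert_{(q_2,q)}+A+A'$. Inserting this into the integral above, $\int_0^t r^{-\alpha}(t-r)^{-2\alpha}\,dr$ is finite. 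Multiplying by $e^{-\lambda t}\le 1$ and taking the supremum over $t$ then gives the claim for every $\lambda\ge0$.

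The main obstacle is technical. Lemma \ref{lemma:difference-sigma} is stated for $\sigma$ compactly supported in $W_0^{1,q}$ and at Lebesgue points, whereas our $\sigma$ is only Lipschitz. Accordingly, we first multiply $\sigma$ by a smooth cutoff equal to $1$ on a neighbourhood of the compact $\mathcal K$ containing all paths. This changes the norms only by constants, provided $\mathcal K$ is enlarged and the norms are read on the enlarged set. By continuity of $\sigma$ (Rademacher), every point is then a Lebesgue point. We must also make sure that the H\"older constant of $X$ and the random constant $\Lambda_\alpha(B)$ are independent of $\sigma$, so that the bound is genuinely linear in the norms and can later be applied with $\tilde\sigma$ in place of $\sigma$.
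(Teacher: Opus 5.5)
Your overall route is the paper's. You use the Nualart--R\u{a}\c{s}canu bound \cite[Eq.~(4.12)]{Nualart-Rascanu2002}. You then handle the increment term with Lemma \ref{lemma:difference-sigma} and the $\tilde\delta$-H\"older continuity of $X$, and the size term $|\sigma(r,X_r)|$ by pinning with Lemma \ref{lemma:simpler-v2}. Your cutoff remark is a more careful version of the paper's ``by restriction, $\sigma$ can be assumed to be supported on $[0,T]\times\mathcal K$''.

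There is, however, one step that fails as written. In your bound for $\int_0^t |G^\sigma_t-G^\sigma_s|(t-s)^{-\alpha-1}\,ds$ you attach the weight $r^{-\alpha}(t-r)^{-2\alpha}$ to $|\sigma(r,X_r)|$. Your last step therefore needs $\sup_{t\le T}\int_0^t r^{-\alpha}(t-r)^{-2\alpha}\,dr<\infty$. But $\int_0^t r^{-\alpha}(t-r)^{-2\alpha}\,dr = B(1-\alpha,1-2\alpha)\,t^{1-3\alpha}$, which blows up as $t\downarrow 0$ whenever $\alpha>1/3$, and the factor $e^{-\lambda t}$ does not help near $t=0$. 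This is not an edge case. Since $\alpha>1-\HeM$, every admissible $\alpha$ exceeds $1/3$ as soon as $\HeM<2/3$, and in that regime your final ``take the supremum over $t$'' step gives $+\infty$.

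The factor $r^{-\alpha}$ in that term is spurious. For the increment $G_t-G_s=\int_s^t\sigma(r,X_r)\,dB_r$, the fractional derivative $D^\alpha_{s+}$ produces $(r-s)^{-\alpha}$, not $r^{-\alpha}$. After Fubini, $\int_0^r (t-s)^{-\alpha-1}(r-s)^{-\alpha}\,ds\le B(1-\alpha,2\alpha)\,(t-r)^{-2\alpha}$, so the size term in the seminorm part carries the weight $(t-r)^{-2\alpha}$ alone. The weight $r^{-\alpha}$ belongs only to the bound for $|G_t|$, where your extra factor $(t-r)^{-\alpha}$ is harmless because $\int_0^t r^{-\alpha}(t-r)^{-\alpha}\,dr\lesssim t^{1-2\alpha}$. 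This gives exactly the kernel $(t-r)^{-2\alpha}+r^{-\alpha}$ in the estimate the paper quotes: a sum, not a product. Both pieces are integrable uniformly in $t\le T$ for $\alpha<1/2$. With this correction the rest of your argument goes through as in the paper.
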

We remark that in the above proposition the constant depends also on the support of $X$. In particular, the constant is random. 
\begin{proof}[Proof of Proposition \ref{prop:G-bound}]
Under \ref{H1} we apply Rademacher Theorem that guarantees that partial derivatives $\partial_t\sigma$ and $\partial_t\nabla \sigma$ exist almost everywhere. On the other hand, since $t\in [0,T]$, we can choose a compact set $\mathcal{K}$ containing $X$ and by restriction, $\sigma$ can be assumed to be supported on $[0,T] \times \mathcal{K}$. Recall also that the solution $X$ is H\"older continuous of any order $\tilde{\delta}<H$.
Now by \cite[Eq. (4.12)]{Nualart-Rascanu2002} we have 
\begin{align*}
&|G^\sigma_t(X)| + \int_0^t \frac{|G^\sigma_t(X)-G^\sigma_s(X)|}{(t-s)^{\alpha+1}}ds \\
&\lesssim \Lambda_\alpha(B)\int_0^t \left((t-r)^{-2\alpha}+r^{-\alpha}\right)\left(|\sigma(r,X_r)|+\int_0^r \frac{|\sigma(r,X_r)-\sigma(y,X_y)|}{(r-y)^{\alpha+1}}dy\right)dr.
\end{align*}
Set 
$$
\Delta(\sigma) =  \Vert\partial_t\nabla_z\sigma\Vert_{(q_2,q)}+ \Vert\nabla_z\sigma\Vert_{(q_2,q)}+ \Vert\partial_t\sigma\Vert_{(q_2,q)}.
$$
Using also H\"older continuity of $X$, Lemma \ref{lemma:difference-sigma} implies 
$$
|\sigma(r,X_r)-\sigma(y,X_y)| \lesssim \Delta(\sigma)\left[|X_r-X_y|^s + |r-y|^{s_2}\right] \leq [X]_{\tilde\delta} \Delta(\sigma)|r-y|^{\min(s\tilde\delta,s_2)}.
$$
Hence we obtain, for any $\alpha < \min(1/2,s\tilde\delta,s_2)$ with $\tilde\delta <\HeM$, that 
\begin{align*}
&\int_0^t \left((t-r)^{-2\alpha}+r^{-\alpha}\right)\int_0^r \frac{|\sigma(r,X_r)-\sigma(y,X_y)|}{(r-y)^{\alpha+1}}dy dr\\
&\lesssim  \Delta(\sigma) \int_0^t \left((t-r)^{-2\alpha}+r^{-\alpha}\right)dydr\\
&\lesssim  \Delta(\sigma).
\end{align*}
For the other term we use the pinning argument. That is, with arbitrary $t_0$ we get from Lemma \ref{lemma:difference-sigma}
\begin{align*}
|\sigma(r,X_r)| &\leq |\sigma(r,X_r)-\sigma(t_0,z_0)| + |\sigma(t_0,z_0)|\\
&\lesssim \Delta(\sigma) + |\sigma(t_0,z_0)| \\
&\lesssim \Delta(\sigma) + \Vert\sigma\Vert_{(q_2,q)}
\end{align*}
where we have chosen $t_0$ and $z_0$ as in \cref{lemma:simpler-v2}. This completes the proof.
\end{proof}
\begin{proposition}
\label{prop:F-bound}
Let $\tilde\delta < \HeM$, $s\in (0,1)$ be such that we can choose $\alpha \in (1-\HeM, \min(1/2,s\tilde\delta))$. Then for any $\lambda \geq 0$ and any 
$q>\frac{d}{1-s}$, $\rho>\frac{1}{1-\alpha}$, with $q\ge \rho$,
we have
    $$
    \Vert F_\cdot^b(X)\Vert_{\alpha,\lambda} \lesssim \Vert \nabla_z b\Vert_{(\rho,q)} + \Vert  b\Vert_{(\rho,q)}. 
    $$
\end{proposition}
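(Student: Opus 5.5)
Following the pattern of Proposition \ref{prop:G-bound}, we bound the two pieces of $\Vert F^b(X)\Vert_{\alpha,\lambda}$ separately: the size term $|F^b_t(X)|$ and the fluctuation term $\int_0^t |F^b_t(X)-F^b_u(X)|(t-u)^{-\alpha-1}du$. Since $e^{-\lambda t}\le 1$, it suffices to treat $\lambda=0$. As before, we pick a compact $\mathcal{K}$ containing the (H\"older continuous) path $X$ and restrict $b$ to $[0,T]\times\mathcal{K}$. By \ref{H2} and Rademacher, $\nabla_z b$ exists a.e., so Lemma \ref{lemma:difference} applies to $b(r,\cdot)$ for each fixed $r$.

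\textbf{Step 1 (pointwise control in $z$ for fixed time).} Fix $r$ and choose $z_0=z_0(r)\in\mathcal{K}$ via Lemma \ref{lemma:simpler}, so that $|b(r,z_0)|\lesssim\Vert b(r,\cdot)\Vert_q$. By Lemma \ref{lemma:difference} with $q>\frac{d}{1-s}$,
\[
|b(r,X_r)|\le |b(r,X_r)-b(r,z_0)|+|b(r,z_0)| \lesssim \operatorname{diam}(\mathcal{K})^s\Vert\nabla_z b(r,\cdot)\Vert_q+\Vert b(r,\cdot)\Vert_q =: \beta(r).
\]
Unlike the $\sigma$ case, no time regularity is required, because we only integrate $b$ in time rather than evaluating it at a pinned time. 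Hence we keep $\beta(r)$ as a function of $r$ and note that $\Vert\beta\Vert_{L^\rho(0,T)}\lesssim\Vert\nabla_z b\Vert_{(\rho,q)}+\Vert b\Vert_{(\rho,q)}$.

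\textbf{Step 2 (the two terms of the norm).} For the size term, H\"older's inequality gives $|F_t^b(X)|\le\int_0^t\beta(r)dr\le T^{1-1/\rho}\Vert\beta\Vert_{L^\rho}$. For the fluctuation term, write
\[
\int_0^t\frac{1}{(t-u)^{\alpha+1}}\int_u^t\beta(r)\,dr\,du=\frac1\alpha\int_0^t\beta(r)\bigl[(t-r)^{-\alpha}-t^{-\alpha}\bigr]dr\le\frac1\alpha\int_0^t\beta(r)(t-r)^{-\alpha}dr
\]
using Fubini, and then apply H\"older with exponents $\rho$ and $\rho'=\rho/(\rho-1)$. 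The kernel $(t-r)^{-\alpha\rho'}$ is integrable exactly when $\alpha\rho'<1$, i.e.\ $\rho>\frac1{1-\alpha}$, which is the stated condition. This yields a bound $C\Vert\beta\Vert_{L^\rho}$ uniformly in $t$. The condition $q\ge\rho$ is needed only to keep the mixed norm consistent with the later comparison to $\Vert\cdot\Vert_{(\rho,q)}$ (and to Remark \ref{rem:sobolev}); Minkowski/H\"older on the compact $\mathcal{K}$ handles any ordering issue.

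\textbf{Expected obstacle.} The main subtlety is Step 1. Lemma \ref{lemma:difference} is stated for $W^{1,q}_0$ functions and Lebesgue points, whereas $b(r,\cdot)$ restricted to $\mathcal{K}$ has no zero trace. I would handle this by multiplying by a smooth cutoff equal to $1$ on a neighbourhood of $\mathcal{K}$. This costs only additional $\Vert b(r,\cdot)\Vert_q$ terms. Since $b$ is continuous, every point is a Lebesgue point, and the bound then holds at $X_r$ for all $r$. The measurability of $r\mapsto\beta(r)$ follows from Fubini. The constant obtained is random only through $\mathcal{K}$, which matches the remark after Proposition \ref{prop:G-bound}.
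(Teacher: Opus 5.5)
Your proof is correct. It follows the paper's skeleton: reduce to $\int_0^t |b(u,X_u)|(t-u)^{-\alpha}\,du$, apply Lemma \ref{lemma:difference} with a pinning point, then H\"older in time with $\rho>\frac{1}{1-\alpha}$. You rederive the reduction by Fubini where the paper cites \cite[Eq. (4.22)]{Nualart-Rascanu2002}.

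The real difference is the pinning step. The paper uses one point $z$ for all times, so after H\"older it is left with $\Vert b(\cdot,z)\Vert_{L^\rho(0,T)}$. It then picks $z$ via Lemma \ref{lemma:simpler} applied to $x\mapsto \Vert b(\cdot,x)\Vert_{L^\rho(0,T)}$. This yields $\Vert\,\Vert b\Vert_{L^\rho_t}\Vert_{L^q_x}$, and Minkowski's integral inequality turns that into $\Vert b\Vert_{(\rho,q)}$. That inequality goes the right way only when $\rho\le q$, which is exactly where the hypothesis $q\ge\rho$ enters.

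Your time-dependent pinning point $z_0(r)$ puts $\Vert b(r,\cdot)\Vert_q$ inside the time integral. The bound on $\Vert\beta\Vert_{L^\rho}$ then comes straight from the definition of $\Vert\cdot\Vert_{(\rho,q)}$, with no Minkowski step. The cost is nil: $r\mapsto z_0(r)$ need not be measurable, but you only integrate the measurable majorant $\beta$.

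It follows that your sentence claiming $q\ge\rho$ is needed ``to keep the mixed norm consistent'' is inaccurate, and you should drop it. Your argument proves the bound without that condition; it is essential only for the paper's fixed-point pinning.

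Your cutoff step also fixes something the paper glosses over. Merely restricting $b$ to $\mathcal{K}$ does not give a $W^{1,q}_0$ function, as Lemma \ref{lemma:difference} requires. Take the cutoff equal to $1$ near the path and supported inside $\mathcal{K}$, so that all norms stay on $\mathcal{K}$.
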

\begin{proof}
By \cite[Eq. (4.22)]{Nualart-Rascanu2002} we have 
\begin{align*}
&|F^b_t(X)| + \int_0^t \frac{|F^b_t(X)-F^b_s(X)|}{(t-s)^{\alpha+1}}ds \\
&\lesssim \int_0^t |b(u,X_u)|(t-u)^{-\alpha}du.
\end{align*}
Since $b$ is locally Lipschitz, it is differentiable almost everywhere. Then 
\cref{lemma:difference} gives, for any $q>\frac{d}{1-s}$,
$$
|b(u,X_u) - b(u,z)| \leq |X_u-z|^s\Vert \nabla_z b(u,\cdot)\Vert_q \lesssim \Vert\nabla_z b(u,\cdot)\Vert_q,
$$
where the constant depends on $\mathcal{K}$.
From this together with pinning argument and H\"older inequality we obtain
\begin{align*}
&|F^b_t(X)| + \int_0^t \frac{|F^b_t(X)-F^b_s(X)|}{(t-s)^{\alpha+1}}ds \\
&\lesssim \int_0^t |b(u,X_u)|(t-u)^{-\alpha}du \\
&\lesssim \int_0^t \Vert\nabla_z b(u,\cdot)\Vert_q (t-u)^{-\alpha}du + \int_0^t |b(u,z)|(t-u)^{-\alpha}du \\
&\lesssim \Vert \nabla_z b(u,\cdot)\Vert_{(\rho,q)} + \Vert b(\cdot,z)\Vert_{\rho},
\end{align*}
since now 
$$
\int_0^t (t-u)^{-\alpha \frac{\rho}{\rho-1}}du < \infty,\text{ for any }\rho> \frac{1}{1-\alpha}.
$$
Now, by \cref{lemma:simpler} we can choose $z$ such that 
$$
\Vert b(\cdot,z)\Vert_\rho \lesssim \Vert\Vert b\Vert_{\rho(t)} \Vert_q \leq \Vert b\Vert_{(\rho,q)}
$$
where the second inequality follows from Minkowski's integral inequality that can be applied since $\rho \leq q$. This completes the proof. 
\end{proof}
\begin{lemma}
\label{lemma:uniform-boundedness}
Suppose that the Assumptions of Section \ref{sec:assumptions} are valid and let $\alpha$ be as in Proposition \ref{prop:coefficient-approximation}. Then  
$$
\sup_n \Vert X^n\Vert_{\alpha,\infty}<\infty.
$$
In particular, the $\tilde\delta$-H\"older norm is uniformly bounded for any $\tilde\delta > \alpha$ and consequently, there exists $N$ such that $X^n \in [-N,N]^d$ for all $n\in \mathbb{N}$.
\end{lemma}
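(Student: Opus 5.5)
The plan is to run the Nualart--R\u{a}\c{s}canu a priori estimate for each $X^n$, pathwise, and check that every constant entering it can be chosen independently of $n$; this is exactly what \ref{H5} supplies. Fix a realisation $\omega$ with $\Lambda_\alpha(B)<\infty$ and $\Vert B\Vert_{1,1-\alpha}<\infty$, where $B=B^{\HeM}$ and $\alpha\in(1-\HeM,1/2)$. Writing $X^n = x_0 + F^{b_n}(X^n) + G^{\sigma_n}(X^n)$ as in \eqref{eq:approx-solution}, we bound the two terms with \cite[Eq. (4.22)]{Nualart-Rascanu2002} and \cite[Eq. (4.12)]{Nualart-Rascanu2002}, as in Propositions \ref{prop:F-bound} and \ref{prop:G-bound}. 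This time we use the growth and Lipschitz hypotheses directly rather than the $L^p$ norms.

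For the drift, \ref{H2}(b) applied to $b_n$ with uniform $L_0$ and $b_0$ gives
\[
|F^{b_n}_t(X^n)| + \int_0^t \frac{|F^{b_n}_t(X^n)-F^{b_n}_s(X^n)|}{(t-s)^{\alpha+1}}ds \lesssim \int_0^t (t-u)^{-\alpha}\bigl(1+|X^n_u|\bigr)du .
\]
For the diffusion term, \ref{H1} (uniform Lipschitz constants in $x$ and $t$) and \ref{H3} (uniform $K_0,\gamma$) give
\[
|\sigma_n(r,X^n_r)| + \int_0^r\frac{|\sigma_n(r,X^n_r)-\sigma_n(y,X^n_y)|}{(r-y)^{\alpha+1}}dy \lesssim 1+|X^n_r|^\gamma + \int_0^r \frac{|X^n_r-X^n_y|}{(r-y)^{\alpha+1}}dy .
\]
Set $\varphi_n(t)=|X^n_t|+\int_0^t |X^n_t-X^n_s|(t-s)^{-\alpha-1}ds$. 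Combining the two bounds yields
\[
\varphi_n(t)\le C_\omega\Bigl(1+\int_0^t\bigl((t-r)^{-2\alpha}+r^{-\alpha}\bigr)\varphi_n(r)\,dr\Bigr),
\]
where $C_\omega$ depends only on $\Lambda_\alpha(B)$, $T$, $x_0$ and the uniform constants from \ref{H5}. In particular $C_\omega$ does not depend on $n$.

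The main obstacle is closing this Gronwall-type inequality with a constant uniform in $n$. If $\gamma<1$ we bound $|x|^\gamma\le 1+|x|$, so the inequality stays linear. We then apply the fractional Gronwall lemma of \cite[Lemma 7.6]{Nualart-Rascanu2002}. Equivalently, we work in $\Vert\cdot\Vert_{\alpha,\lambda}$ with $\lambda$ chosen large (depending only on $C_\omega$), so that $C_\omega\sup_t\int_0^t e^{-\lambda(t-r)}((t-r)^{-2\alpha}+r^{-\alpha})dr<1/2$. The linear term is then absorbed, which gives $\Vert X^n\Vert_{\alpha,\lambda}\le 2C_\omega$ and hence $\Vert X^n\Vert_{\alpha,\infty}\le 2C_\omega e^{\lambda T}$ uniformly in $n$. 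This is the same route as the existence proof in \cite[Th. 2.1]{Nualart-Rascanu2002}, whose bounds involve only these constants.

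For the Hölder statement, we estimate the increment $X^n_t-X^n_s$ using \cite[Eq. (4.12)]{Nualart-Rascanu2002} for the stochastic integral and the integrand bound above. This gives $|X^n_t-X^n_s|\lesssim \Lambda_\alpha(B)(1+\Vert X^n\Vert_{\alpha,\infty})|t-s|^{1-\alpha}$. Since $\alpha<1/2$ we have $1-\alpha>\alpha$, so $X^n$ is $(1-\alpha)$-Hölder with norm bounded uniformly in $n$. Because $\alpha$ can be taken arbitrarily close to $1-\HeM$, this covers every $\tilde\delta<\HeM$, and in particular $\tilde\delta\in(\alpha,1-\alpha]$. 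Finally, $\sup_t|X^n_t|\le\Vert X^n\Vert_{\alpha,\infty}$, so we may take $N=\lceil\sup_n\Vert X^n\Vert_{\alpha,\infty}\rceil$. This $N$ is random but independent of $n$, and $X^n\in[-N,N]^d$ for all $n$.
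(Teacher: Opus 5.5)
Your proposal is correct and follows essentially the paper's route: the paper cites the a priori bound $\Vert X^n\Vert_{\alpha,\infty}\le C_1\exp(C_2\Lambda_\alpha(B))$ of \cite[Proposition 5.1]{Nualart-Rascanu2002} and notes that $C_1,C_2$ depend only on the constants made uniform in $n$ by \ref{H5}, and your Gronwall argument (absorbing the linear term in $\Vert\cdot\Vert_{\alpha,\lambda}$) re-derives exactly this by hand. Your H\"older step is better justified than the paper's: you obtain uniform $(1-\alpha)$-H\"older bounds from the integral estimate and let $\alpha\downarrow 1-\HeM$, which covers $\tilde\delta\in(\alpha,\HeM)$, the only range where the claim can hold; the paper instead invokes the embedding of H\"older spaces into $W_{\alpha,\infty}$, which runs in the wrong direction to yield H\"older bounds of order $>\alpha$.
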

\begin{proof}
By \cite[Proposition 5.1]{Nualart-Rascanu2002} we have 
$$
\Vert X^n\Vert_{\alpha,\infty} \leq C_{1,n} \exp(C_{2,n}\Lambda_{\alpha}(B))
$$
where now $C_{1,n}$ and $C_{2,n}$ depend only on $\alpha$, $T$, and the Lipschitz and H\"older constants appearing in Assumptions \ref{H1}-\ref{H3}. As by assumption these are uniformly bounded, it follows that $C_{1,n}$ and $C_{2,n}$ can be chosen independently of $n$. Finally, the embedding of H\"older space into $W_{\alpha,\infty}$ gives that for the H\"older seminorm $[\cdot]_{\tilde\delta}$ we also have
$$
\sup_{n}[X^n]_{\tilde\delta} < \infty
$$
for any $\tilde\delta>\alpha$. The final claim now follows trivially from this.
\end{proof}
We are now ready to prove Proposition \ref{prop:coefficient-approximation}.
\begin{proof}[Proof of Proposition \ref{prop:coefficient-approximation}]
Without loss of generality and for the sake of simplicity, we only consider the case $d=1$. The general case can be then treated with the same arguments by consider each components separately. From \eqref{eq:solution}-\eqref{eq:approx-solution} we have
\begin{align*}
    X_t - X_t^n &= F_t^b(X)-F_t^{b_n}(X^n) + G_t^\sigma(X)-G_t^{\sigma_n}(X^n)\\
    &=F_t^b(X)-F_t^{b_n}(X) + F_t^{b_n}(X)-F_t^{b_n}(X^n)\\
    &+ G_t^\sigma(X)-G_t^{\sigma_n}(X) + G_t^{\sigma_n}(X)-G_t^{\sigma_n}(X^n) \\
    &=F_t^{b-b_n}(X) + F_t^{b_n}(X)-F_t^{b_n}(X^n)\\
    &+ G_t^{\sigma-\sigma_n}(X) + G_t^{\sigma_n}(X)-G_t^{\sigma_n}(X^n),
\end{align*}
leading to
\begin{align*}
    \Vert X_\cdot - X_\cdot^n\Vert_{\alpha,\lambda} 
    &\leq  \Vert F_\cdot^{b-b_n}(X)\Vert_{\alpha,\lambda} + \Vert G_\cdot^{\sigma-\sigma_n}(X)\Vert_{\alpha,\lambda} \\
    &+\Vert F_\cdot^{b_n}(X)-F_\cdot^{b_n}(X^n)\Vert_{\alpha,\lambda} + \Vert G_\cdot^{\sigma_n}(X)-G_\cdot^{\sigma_n}(X^n)\Vert_{\alpha,\lambda}.
\end{align*}
By using \cite[Proposition 4.4]{Nualart-Rascanu2002} we get
$$
\Vert F_\cdot^{b_n}(X)-F_\cdot^{b_n}(X^n)\Vert_{\alpha,\lambda} \leq \frac{d_{N,n}}{\lambda^{1-\alpha}}\Vert X_\cdot - X_\cdot^n\Vert_{\alpha,\lambda},
$$
where $N$ is chosen so large that both $X$ and $X^n$ are contained in $[-N,N]$, and $d_{N,n}$ depends on $\alpha$, $T$, and the Lipschitz constant of $b_n$ (restricted on $[-N,N]$). By Lemma \ref{lemma:uniform-boundedness} we can now take $N$ to be independent of $n$ and since Lipschitz constants are uniformly bounded, we have
$$
\Vert F_\cdot^{b_n}(X)-F_\cdot^{b_n}(X^n)\Vert_{\alpha,\lambda} \leq \frac{d_{N}}{\lambda^{1-\alpha}}\Vert X_\cdot - X_\cdot^n\Vert_{\alpha,\lambda}.
$$
Similarly, for the fourth term \cite[Proposition 4.2]{Nualart-Rascanu2002} gives that  
$$
\Vert G_\cdot^{\sigma_n}(X)-G_\cdot^{\sigma_n}(X^n)\Vert_{\alpha,\lambda} \leq \frac{C(B)C_{N,n}}{\lambda^{1-2\alpha}}(1+\Delta(X)+\Delta(X^n))\Vert X_\cdot - X_\cdot^n\Vert_{\alpha,\lambda},
$$
where $C(B)$ depends only on $B$, $N$ is chosen so large that both $X$ and $X^n$ are contained in $[-N,N]$, $C_{N,n}$ depends on $\alpha$, $T$, and the Lipschitz constants of $\sigma_n$ and $\nabla \sigma_n$ (restricted on $[-N,N]$), and  
$$
\Delta(f) = \sup_t \int_0^t \frac{|f(t)-f(s)|}{|t-s|^{1+\alpha}}ds.
$$
Using 
$$
\Delta(f) \leq [f]_{\tilde\delta} \sup_{t\in[0,T]}\int_0^t (t-s)^{ \tilde\delta-1-\alpha}ds \lesssim [f]_{\tilde\delta}
$$
for any $\tilde\delta \in (\alpha,\HeM)$ and Lemma \ref{lemma:uniform-boundedness}, the above reasoning gives us 
$$
\Vert G_\cdot^{\sigma_n}(X)-G_\cdot^{\sigma_n}(X^n)\Vert_{\alpha,\lambda} \leq \frac{C(B)C_{N}}{\lambda^{1-2\alpha}}\Vert X_\cdot - X_\cdot^n\Vert_{\alpha,\lambda}.
$$
This implies that
\begin{align*}
    \Vert X_\cdot - X_\cdot^n\Vert_{\alpha,\lambda} 
    &\leq \Vert F_\cdot^{b-b_n}(X)\Vert_{\alpha,\lambda} + \Vert G_\cdot^{\sigma-\sigma_n}(X)\Vert_{\alpha,\lambda} \\
    &+ \left[\frac{d_{N}}{\lambda^{1-\alpha}} + \frac{C(B)C_{N}}{\lambda^{1-2\alpha}}\right] \Vert X_\cdot - X_\cdot^n\Vert_{\alpha,\lambda},
\end{align*}
which, together with Proposition \ref{prop:G-bound} and Proposition \ref{prop:F-bound}, leads to 
\begin{align*}
   &\left[1-\frac{d_{N}}{\lambda^{1-\alpha}} - \frac{C(B)C_{N}}{\lambda^{1-2\alpha}}\right] \Vert X_\cdot - X_\cdot^n\Vert_{\alpha,\lambda} \\
    &\lesssim \Vert \partial_t\nabla_z\tilde\sigma\Vert_{(q_2,q)}+\Vert\nabla_z\tilde\sigma\Vert_{(q_2,q)}+\Vert \partial_t\tilde\sigma\Vert_{(q_2,q)} + \Vert \Vert\tilde\sigma\Vert_{(q_2,q)} \\
    &+\Vert \nabla_z \tilde b \Vert_{(\rho,q)} + \Vert  \tilde b\Vert_{(\rho,q)}. 
\end{align*}
Choosing now $\lambda$ large enough yields the first claim, from which the second follows by equivalence of norms $\Vert \cdot\Vert_{\alpha,\lambda}$ and $\Vert \cdot\Vert_{\alpha,\infty}$. This completes the whole proof.
\end{proof}

\end{document}